\newcommand{\R}{\mathbb{R}}
\newcommand{\N}{\mathbb{N}}
\newcommand{\Z}{\mathbb{Z}}
\newcommand{\PP}{\mathbb{P}}
\newcommand{\Div}{\mathrm{div}\,}
\newcommand{\pt}{\partial}
\DeclareMathOperator*{\esup}{\text{\rm ess\,sup}}
\newtheorem{proposition}{Proposition}[section]
\newtheorem{theorem}{Theorem}[section]
\newtheorem{lemma}{Lemma}[section]
\newtheorem{corollary}[theorem]{Corollary}
\theoremstyle{definition}
\newtheorem{remark}{Remark}[section]
\title{Forced rapidly dissipative Navier--Stokes flows}
\author{
Lorenzo Brandolese \\
{\normalsize Institut Camille Jordan} \\
{\normalsize Universit\'{e} Lyon 1 }\\
{\normalsize E-mail:~\texttt{brandolese@math.univ-lyon1.fr}}
\and
Takahiro Okabe \\
{\normalsize Graduate School of Engineering Science}\\
{\normalsize Osaka University}\protect\\
{\normalsize E-mail:~\texttt{okabe@sigmath.es.osaka-u.ac.jp}}
}
\newcommand{\dd}{{\rm\,d}}
\newcommand{\w}{\mathcal{W}}
\date{}
\begin{document}
\maketitle
\begin{abstract}
We show that,
by acting on a finite number of parameters of a compactly supported
control force, we can increase the energy dissipation rate of any small solution of the Navier--Stokes equations in $\R^n$.
The magnitude of the control force is bounded by a negative Sobolev
norm of the initial velocity. Its support can be chosen to be contained in an arbitrarily small region, in time or in space.
\end{abstract}

\textbf{Key word:} Navier-Stokes equations, Energy decay, Asymptotic profiles, Control force.
\\
\textbf{MSC(2010):} 35Q30; 76D05
\section{Introduction}
Let $n\geq 2$. We consider the incompressible Navier-Stokes equations in $\R^n$:
\begin{equation*}\tag{NS}
\left\{
\begin{aligned}
&\pt_t u -\Delta u + u\cdot \nabla u 
+ \nabla \pi=\nabla \cdot {f}
\qquad \text{in } \R^n\times (0,\infty),\\
& \Div u=0
\qquad \text{in } \R^n\times (0,\infty),\\
&u(\cdot,0)=a 
\qquad \text{in } \R^n,
\end{aligned}\right.
\end{equation*}
Here $a=(a_1,\ldots, a_n)$ and $u=(u_1,\ldots,u_n)(\cdot,t)$ are the velocity field at the time $0$ and $t$ respectively, the scalar field $\pi$ is the pressure and $f=f(x,t)$ is a $n\times n$ matrix-valued function.

The main issue of this paper is the large-time energy dissipation of solutions. An extensive overview of
this topic is contained in the survey paper~\cite{BraS}. See also, e.g., \cite{HLZZ, Vila, XuZ} for a  small sample of the most recent contributions.

A well known consequence of classical Wiegner's theorem~\cite{Wiegner} is that
solutions of the unforced ($f\equiv0$) Navier--Stokes equations in $\R^n$ arising from well localized data dissipate their energy
at the rate $\|u(t)\|_2^2=\mathcal{O}(t^{-(n+2)/2})$ as $t\to+\infty$.
By definition, \emph{rapidly dissipative solutions} to (NS) are global solutions
such that the energy dissipates faster, i.e.,
such that
\[
    \|u(t)\|_2^2=o\bigl(t^{-(n+2)/2}\bigr)
    \qquad\text{as $t\to+\infty$}.
\] 
Contrary to Navier--Stokes flows with the usual energy dissipation rate 
$\mathcal{O}(t^{-(n+2)/2})$, such rapidly dissipative solutions are not easily constructed, even though their existence is known \cite{BraS, GALW, Okabe Tsutsui JDE}: only few examples are available, which are all obtained putting stringent symmetry conditions. See, e.g.,  \cite{Brandolese MA 2004}. 
But the drawback of putting symmetry constraints is that the needed symmetries are unstable under small perturbation, so physically realistic flow will not fulfil them spontaneously,
in general.

The situation is different if one allows a control force.
In \cite{BraOka}, a new idea for an algorithmic construction of rapidly dissipative flows was proposed: for any small enough initial data  $a\in L^n(\R^n)$
it was shown how to construct an external force of divergence form, such that the corresponding solution is rapidly dissipative.
But the approach in \cite{BraOka} provides no quantitative information  on the control force. Moreover, the argument therein was quite involved, based on
a cumbersome hierarchy of scale-invariant decay estimates. 
In the present paper, developping further the ideas of \cite{BraOka}, we present a much more transparent construction. More importantly, our new approach allow us to quantify the size of the needed control force in terms of the size of the initial velocity, and it 
makes evidence that the rapid dissipation effect of the solution can be always achieved by \emph{acting on a finite number of parameters of an arbitrarily given control force}.

In particular, our result shows that the forcing term $f=(f_{k\ell})$
can always chosen of the form $f=A\chi$, where $\chi$ is, e.g.,
a rescaled arbitrary cut-off function (or an indicator function), 
satisfying $\int_0^\infty\!\!\int_{\R^n}\chi=1$ and $A=(A_{k\ell})$ is a $n\times n$ constant symmetric matrix 
(depending on the initial data)
such that $|A_{k\ell}|\le \|a\|_{\dot H^{-1}}^2$.
In particular, we can force any Navier--Stokes flow, with initially small velocity, to be rapidly dissipative by just tuning, at most, $2+n(n+1)/2$ parameters of an essentially arbitrary external forcing. If the function $\chi$ is chosen to be 
compactly supported in space-time, then the control force just acts on a bounded region and during a finite time interval.
In addition, we can achieve the result by either choosing to act with a control force on an arbitrarily fixed small spatial region (for a possibly long, but finite, time) or for an arbitrarily short time interval (in a possibly wide, but finite, spatial region), the size of $f$ only depending on the $\dot H^{-1}$-norm of the initial data.

From the technical point of view, a novelty of the present paper
is that we do not directly rely on asymptotic profiles established in earlier works \cite{Carpio, Fujigaki Miyakawa SIAM, GALW}, as those would require to put too stringent conditions on the data. Our approach is based on a new simple abstract result, that can be used to describe the large time asymptotics for a large class of integral equations.

\section{Statement of the main result}
\label{sec:statement}

Let us introduce, for $1\le r\le \infty$ the following function spaces:
\[
\begin{split}
&X_r=\bigl\{v\in L^\infty_{\rm loc}\bigl(0,\infty;L^r(\R^n)\bigr)
\colon \|v\|_{X_r}=
\esup\limits_{t>0}\,t^{\frac12-\frac{n}{2r}}\|v(t)\|_r<\infty\bigr\},\\
&Y_r=\bigl\{f\in L^\infty_{\rm loc}\bigl(0,\infty;L^r(\R^n)\bigr)
\colon \|f\|_{Y_r}=
\esup\limits_{t>0}\,t^{1-\frac{n}{2r}}\|f(t)\|_r<\infty\bigr\}.\\
\end{split}
\]
We will not distinguish between scalar, vector or tensor-valued fields in our notations, as this does not create confusion and allows us to simplify the notations.

Notice that $X_r$ is usual Kato's space for the velocity field
and such space is left invariant by the natural scaling of of (NS), $u\mapsto u_\lambda$,
with $u_\lambda(x,t)=\lambda u(\lambda x, \lambda^2 t)$.
See \cite[Chap.5]{BCD}.
The space $Y_r$ is the corresponding natural space for the forcing term~$f$, and it is left invariant by the natural scaling,
i.e., $\|f_\lambda\|_{Y_r}=\|f\|_{Y_r}$, where 
$f_\lambda(x,t)=\lambda^2f(\lambda x,\lambda^2 t)$.

We assume the reader to be familiar with Besov spaces.
We will mainly use homogeneous Besov of the form $\dot B^{s}_{r,q}(\R^n)$, with $1\le r,q\le\infty$ and negative regularity~$s$.
These are normed by 
$f\mapsto \bigl\| 2^{js}\|\Delta_j f\|_r\bigr\|_{\ell^q(\Z)}$,
where $(\Delta_j f)_{j\in\Z}$ is the Littlewood--Paley decomposition
of the tempered distribution~$f\in\mathcal{S}'_h$.  (See~\cite[Chap.~2]{BCD}).

We define $\dot B^{s}_{r,c_0}(\R^n)$ as the closed subspace of 
$\dot B^{s}_{r,\infty}(\R^n)$ such that
\[
\lim_{j\to-\infty} 2^{js}\|\Delta_j f\|_r=0.
\]
For any $1\le q<\infty$, we have of course the inclusions 
$\dot B^{s}_{r,q}(\R^n)\subset \dot B^{s}_{r,c_0}(\R^n)
\subset \dot B^{s}_{r,\infty}(\R^n)$.

The following fact on the Navier--Stokes equation is well known:
Let $2\le n<r<\infty$ and
$
a\in 
\dot B^{-1+n/r}_{r,\infty}(\R^n)$, with 
$\nabla\cdot a=0$ and $f\in Y_r$.
Then there exists a constant $\eta>0$ (only dependent on~$n$ and~$r$),
such that
if
\begin{equation*}
 \tag{A}
 \|a\|_{\dot B^{-1+n/r}_{r,\infty}}+\|f\|_{Y_r}<\eta,
\end{equation*}
then there exists a solution $u\in X_r$ to (NS), such that 
$\|u\|_{X_r}<2\bigl(\|a\|_{\dot B^{-1+n/r}_{r,\infty}}+\|f\|_{Y_r}\bigr)$, which
is uniquely defined in the
ball $\{v\in X_r\colon \|v\|_{X_r}<2\eta\}$.
This is a straightforward generalization of Cannone's  analysis of Kato's construction of mild solutions in $X_r$ when an external forcing is present.
See, e.g., \cite[Chapt. 5]{BCD}. See also \cite{Can, Lem, Lem21}.

Since the work of Miyakawa and Schonbek \cite{Miyakawa Schonbek}, we know that
the large time behavior of the velocity field at fast decay rates --- when the large time asymptotics is no longer governed by the linear part --- is related to
the algebraic properties of the energy matrix of the flow
\begin{equation}
\label{EMi}
\int_0^\infty\!\!\!\int_{\R^n} (u\otimes u)(y,s)\dd y\dd s.
\end{equation}
The starting point of our analysis will be the construction of solutions such that
the energy matrix is well defined, i.e., solutions such that $u\in L^2(\R^n\times \R^+)$. 
This will be done in Proposition~\ref{prop:exi}, by adding to the above smallness condition~(A)
the condition~$a\in \dot H^{-1}(\R^n)$. This additional condition 
is very natural, because the latter
is necessary and sufficient in order to have that $e^{t\Delta}a\in L^2(\R^n\times\R^+)$.

We now state our main result. 
Next theorem, roughly, asserts the following: if $a$ is an essentially \emph{arbitrary small and well localized initial data}
and if $\chi\in L^\infty_c(\R^n\times \R^+)$ is \emph{an arbitrary forcing profile with non-zero integral} then, 
after modifying the scalar function $\chi$ by multiplying it by a suitable $n\times n$ matrix $A$ with constant coefficients,
the unique global solution of the Navier--Stokes equation with external force 
$\nabla\cdot (A\chi)$ is rapidly dissipative.
  
\begin{theorem}
\label{th:theo1} 
Let $2\le n<r<\infty$.
Let
\[
a\in 
\dot B^{-1+n/r}_{r,\infty}(\R^n)\cap \dot H^{-1}(\R^n),
\qquad \nabla\cdot a=0.
\]
Let  $\chi\in L^\infty_c(\R^n\times\R^+)$, such that 
$\int_0^\infty\!\!\int\chi=1$.
There exist $\eta_0>0$ (only dependent on~$n$ and~$r$), 
and a constant real matrix $(\sigma_{k\ell})$ 
(dependent on $n$, $r$ and also on $a$, $\chi$), with
\[
|\sigma_{k\ell}|\le 1
\qquad(k,\ell=1,\ldots,n),
\]
such that if 
\begin{equation}
\label{small:all}
\begin{cases}
\|a\|_{\dot B^{-1+n/r}_{r,\infty}}+ \|a\|_{\dot H^{-1}}^2\|\chi\|_{Y_r}<\eta_0\\
 \|a\|_{\dot H^{-1}}
 \|\chi\|_{L^{\frac{4+2n}{4+n}}(\R^n\times \R^+)} <\eta_0
\end{cases}
\end{equation}
and if
\[
f=(f_{k\ell}),\quad 
\text{with}\quad
f_{k\ell}(x,t)=\sigma_{k\ell}\,\|a\|_{\dot H^{-1}}^2\chi(x,t), 
\]
then there exists a global solution $u\in X_r\cap L^2(\R^n\times\R^+)$~to (NS)
such that
\begin{equation}
\label{eq:ct}
\lim_{t\to+\infty} t^{\frac{1}{2}+\frac{n}{2}(1-\frac{1}{q})}\|u(t)-e^{t\Delta}a\|_q=0
\qquad
\textstyle
\text{for all $1\le q\le \infty$}.
\end{equation}
The above solution $u$ is rapidly dissipative, i.e.,
$\|u(t)\|_2^2=o\bigl(t^{-(n+2)/2}\bigr)$ as $t\to+\infty$,
if and only if the initial data~$a$ belongs also to $\dot B^{-(n+2)/2}_{2,c_0}(\R^n)$.
\end{theorem}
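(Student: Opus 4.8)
The plan is to work from the mild formulation of (NS) with forcing $\nabla\cdot f$,
\[
u(t)=e^{t\Delta}a+\int_0^t e^{(t-s)\Delta}\PP\,\nabla\cdot\bigl(f-u\otimes u\bigr)(s)\dd s,
\]
and to exploit that, for large $t$, such a Duhamel integral is driven by the space--time integral of its source. Writing $w=u-e^{t\Delta}a$, the leading profile of $w$ in $L^q$ is $\PP\,\nabla\cdot(M\,G_t)$, where $G_t$ is the heat kernel and $M=\int_0^\infty\!\!\int_{\R^n}(f-u\otimes u)$ is the total source matrix; since $\|\nabla G_t\|_q\sim t^{-\frac12-\frac n2(1-\frac1q)}$, this profile decays at exactly the critical rate appearing in \eqref{eq:ct}. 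The whole strategy is therefore to choose $\sigma$ so that $M=0$. As $\int_0^\infty\!\!\int f=\sigma\,\|a\|_{\dot H^{-1}}^2$ while the nonlinear contribution is the energy matrix $E=\int_0^\infty\!\!\int u\otimes u$ of \eqref{EMi}, the condition $M=0$ reads $\sigma\,\|a\|_{\dot H^{-1}}^2=E$, a self-consistency relation because $E$ depends on $\sigma$ through $u$.

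First I would set up the existence theory parametrized by $\sigma$: for each symmetric $\sigma$ with $|\sigma_{k\ell}|\le1$ the forcing $f=\sigma\,\|a\|_{\dot H^{-1}}^2\chi$ obeys $\|f\|_{Y_r}\le\|a\|_{\dot H^{-1}}^2\|\chi\|_{Y_r}$, so the first line of \eqref{small:all} gives condition~(A) and Proposition~\ref{prop:exi} produces a unique $u_\sigma\in X_r\cap L^2(\R^n\times\R^+)$ with $\|u_\sigma\|_{L^2(\R^n\times\R^+)}^2\le\|a\|_{\dot H^{-1}}^2$; here the linear part already contributes $\tfrac12\|a\|_{\dot H^{-1}}^2$, and the exponent $\tfrac{4+2n}{4+n}$ in the second line of \eqref{small:all} is the scaling-critical one for which $\|f\|_{L^{(4+2n)/(4+n)}}$ controls the forcing term in $L^2(\R^n\times\R^+)$, keeping the remaining corrections small. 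I would then consider $\Phi(\sigma)=E(\sigma)/\|a\|_{\dot H^{-1}}^2$ on the compact convex set $K=\{\sigma=\sigma^{\mathsf T}:|\sigma_{k\ell}|\le1\}$. Since $u\otimes u$ is symmetric and $|E_{k\ell}(\sigma)|\le\|u_\sigma\|_{L^2(\R^n\times\R^+)}^2\le\|a\|_{\dot H^{-1}}^2$, the map $\Phi$ sends $K$ into itself, which simultaneously yields the bound $|\sigma_{k\ell}|\le1$ and prepares Brouwer's theorem.

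The step I expect to be the main obstacle is proving that $\Phi$ is \emph{continuous} on $K$, which requires a stability estimate making $\sigma\mapsto u_\sigma$ continuous into $L^2(\R^n\times\R^+)$, hence $\sigma\mapsto E(\sigma)$ continuous. I would obtain this from the perturbative structure of the construction, estimating $u_\sigma-u_{\sigma'}$ by $|\sigma-\sigma'|\,\|a\|_{\dot H^{-1}}^2\|\chi\|$ times the usual small factors; the delicate point is running this argument at once in the scale-invariant norm of $X_r$ and in $L^2(\R^n\times\R^+)$, since the energy matrix lives in the latter while the contraction is natural in the former. Brouwer's theorem then gives a fixed point $\sigma^*=\Phi(\sigma^*)$, i.e.\ $E(\sigma^*)=\sigma^*\|a\|_{\dot H^{-1}}^2$, which is exactly $M=0$.

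With $\sigma=\sigma^*$ and $M=0$, I would invoke the abstract large-time result announced in the introduction to turn the vanishing of the leading profile into the faster decay \eqref{eq:ct} for every $1\le q\le\infty$, the compact support of $\chi$ and the $L^2(\R^n\times\R^+)$ bound on $u$ supplying the spatial localization and time-integrability the lemma needs (the endpoints $q=1,\infty$ being the only points requiring extra care). Finally the rapid-dissipation equivalence follows by comparison with the linear flow. Taking $q=2$ in \eqref{eq:ct} gives $\|w(t)\|_2=o\bigl(t^{-(n+2)/4}\bigr)$ \emph{unconditionally}, so expanding $\|u(t)\|_2^2=\|e^{t\Delta}a\|_2^2+2\langle e^{t\Delta}a,w\rangle+\|w\|_2^2$ and bounding the cross term by Cauchy--Schwarz shows $\|u(t)\|_2^2=o\bigl(t^{-(n+2)/2}\bigr)$ if and only if $\|e^{t\Delta}a\|_2^2=o\bigl(t^{-(n+2)/2}\bigr)$. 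Since the homogeneous Besov norm characterizes heat-flow decay, the latter holds precisely when $a\in\dot B^{-(n+2)/2}_{2,c_0}(\R^n)$, which is the asserted equivalence.
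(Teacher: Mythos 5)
Your proposal is sound in its overall architecture and reaches the stated conclusions, but the mechanism by which you select the matrix $\sigma$ is genuinely different from the paper's. The paper (Proposition~\ref{pro:algo}) runs an explicit iteration: $f^{(0)}=0$, then $f^{(m)}_{k\ell}=(c^{(m-1)}_{k\ell}-\bar c^{(m-1)}\delta_{k\ell})\chi$, where $c^{(m-1)}$ is the energy matrix of the solution at the previous step and $\bar c^{(m-1)}$ the average of its diagonal; the stability estimates (the very ones you invoke for continuity) show this scheme contracts in $L^2(\R^n\times\R^+)$, so the limit exists by a Banach-type argument. The limiting $\sigma$ is trace-free and the matrix $\int_0^\infty\!\!\int(v\otimes v-f)$ is a \emph{multiple of the identity}, not zero; the leading term in the expansion is then killed by the Miyakawa--Schonbek criterion, $\sum_k F_{kk,j}\equiv0$, because the Leray projector annihilates gradients. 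You instead aim at $\int_0^\infty\!\!\int(u\otimes u-f)=0$ exactly, via Brouwer's theorem on the compact convex set $K$ of symmetric matrices with entries bounded by $1$: the self-map property follows, as you say, from $|E_{k\ell}(\sigma)|\le\|u_\sigma\|^2_{L^2(\R^n\times\R^+)}\le\|a\|^2_{\dot H^{-1}}$, the continuity of $\sigma\mapsto u_\sigma$ is exactly the paper's contraction estimate (run in $L^2(\R^n\times\R^+)$ with the bilinear terms absorbed through Lemma~\ref{lem:mix}, item~ii), thanks to the $X_r$-smallness), and with the total source matrix equal to zero you dispense with Miyakawa--Schonbek altogether. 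What you give up is the constructive character of the paper's proof (Brouwer is non-constructive, whereas under the same smallness the map is in fact a contraction, so Banach would give the fixed point uniquely and algorithmically), and the feature, noted in the paper's footnote, that the trace-subtracted choice makes $f$ vanish identically whenever the unforced flow is already rapidly dissipative; with your normalization $f$ is nonzero whenever $u\not\equiv0$.

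Two steps need more justification than you give them; both are repairable with results already in the paper, so these are gaps of justification rather than of substance. First, the hypotheses of Theorem~\ref{th:asy} are \emph{not} supplied by the space-time bound $u\in L^2(\R^n\times\R^+)$ plus the compact support of $\chi$: one needs the pointwise-in-time decay $\|\w(t)\|_1=\mathcal{O}(1/t)$ for every $q$ (not only at the endpoints), and $\|\w(t)\|_\beta=\mathcal{O}\bigl(t^{-1-\frac n2(1-\frac1\beta)}\bigr)$ for $q\ge n/(n-1)$. For $\w=u\otimes u-f$ this amounts to $u\in\overline X_2$ and $u\in\overline X_{2q}$, which comes from Proposition~\ref{prop:exi}, item~iii) together with Lemma~\ref{lem:L2f}, item~iv) and Remark~\ref{rem:tec}, using that $\chi\in L^\infty_c(\R^n\times\R^+)$ lies in $\overline Y_{2n/(n+2)}\cap\overline Y_\infty$; the $L^2$ space-time estimate alone gives integrability but no decay rate. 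Second, in the ``only if'' direction of the last claim, Lemma~\ref{lem:hf} presupposes $a\in\dot B^{-(n+2)/2}_{2,\infty}(\R^n)$, so before applying its converse you must verify this membership: it follows by combining the large-time decay $\|e^{t\Delta}a\|_2=o(t^{-(n+2)/4})$ (obtained from rapid dissipation and \eqref{eq:ct} with $q=2$) with the small-time bound $\|e^{t\Delta}a\|_2\lesssim t^{-1/2}\|a\|_{\dot H^{-1}}$, which keeps $t^{(n+2)/4}\|e^{t\Delta}a\|_2$ bounded as $t\to0^+$.
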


Next corollary  puts in evidence that we can always force the rapid dissipation of the solution by acting with a force on a tiny compact region in space-time,
the actual size of such a region depending on the size of the $\dot H^{-1}$-norm of the initial velocity. 
In addition, we can choose $\nabla\cdot f$ to be a distribution supported in a set of Lebesgue measure zero in space-time (by taking $\phi$ and $\psi$ to be indicator functions in the corollary below).

\begin{corollary}
\label{cor:uni}
Let  $a\in \dot B^{-1+n/r}_{r,\infty}(\R^n)\cap \dot H^{-1}(\R^n)$, with $n<r<\infty$.
Let $R,R'>0$ and 
\[
\chi(x,t)=R^nR'\phi(Rx)\,\psi(R't),
\]
where $\phi$ is a $L^\infty(\R^n)$-function supported in the unit cube $[0,1]^n$ and $\psi$ is a $L^\infty(\R^+)$-function supported in the interval~$[0,1]$, both with integral equal to one.
There exists $\eta_0'>0$ (depending on $\phi$, $\psi$, $n$ and $r$), such that if
\begin{equation}
\label{sysac}
\begin{cases}
\|a\|_{\dot B^{-1+n/r}_{r,\infty}}<\eta_0'\\
\|a\|_{\dot H^{-1}}^2 R^{n(1-\frac{1}{r})}(R')^{\frac{n}{2r}}<\eta_0'\\
\|a\|_{\dot H^{-1}} R^{\frac{n^2}{4+2n}} (R')^{\frac{n}{4+2n}}<\eta_0',
\end{cases}
\end{equation}
then the conclusion of Theorem~\ref{th:theo1} applies.
\end{corollary}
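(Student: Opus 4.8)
The plan is to recognize that this corollary is nothing but a concrete instance of Theorem~\ref{th:theo1}: the only work is to check that the displayed profile $\chi$ is an admissible forcing profile for the theorem, and then to translate the two abstract smallness conditions in~\eqref{small:all} into the geometric conditions~\eqref{sysac} on the parameters $R,R'$ by a pure scaling computation.

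First I would verify admissibility. Since $\phi\in L^\infty(\R^n)$ and $\psi\in L^\infty(\R^+)$ are supported in $[0,1]^n$ and $[0,1]$ respectively, the function $\chi(x,t)=R^nR'\phi(Rx)\,\psi(R't)$ belongs to $L^\infty_c(\R^n\times\R^+)$, with support contained in $[0,1/R]^n\times[0,1/R']$. The change of variables $y=Rx$, $s=R't$ gives $\int_0^\infty\!\!\int\chi=\int\phi\int\psi=1$, so $\chi$ satisfies the hypotheses of Theorem~\ref{th:theo1}, the prefactor $R^nR'$ being exactly the normalisation that enforces unit integral.

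Next I would compute the two norms of $\chi$ entering~\eqref{small:all}. For the $Y_r$-norm, writing $\|\chi(t)\|_r=R^{n(1-1/r)}R'|\psi(R't)|\,\|\phi\|_r$ and substituting $s=R't$ in $\esup_{t>0}t^{1-n/(2r)}\|\chi(t)\|_r$, the powers of $R'$ collect into $(R')^{n/(2r)}$ and one obtains
\[
\|\chi\|_{Y_r}=C_1\,R^{n(1-\frac1r)}(R')^{\frac{n}{2r}},
\qquad
C_1=\|\phi\|_r\,\esup_{s>0}s^{1-\frac{n}{2r}}|\psi(s)|,
\]
where the essential supremum is finite since $1-n/(2r)>0$ for $r>n$ and $\psi$ is bounded with support in $[0,1]$. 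Likewise, with $p=\tfrac{4+2n}{4+n}$, a direct computation of $\int_0^\infty\!\!\int|\chi|^{p}$ produces the exponents $n-n/p=\tfrac{n^2}{4+2n}$ and $1-1/p=\tfrac{n}{4+2n}$, so that
\[
\|\chi\|_{L^{p}(\R^n\times\R^+)}=C_2\,R^{\frac{n^2}{4+2n}}(R')^{\frac{n}{4+2n}},
\qquad
C_2=\|\phi\|_p\,\|\psi\|_p.
\]
These two identities are precisely what makes the powers of $R$ and $R'$ in the second and third lines of~\eqref{sysac} appear; the content of the corollary is exactly the observation that the $Y_r$- and $L^p$-scalings of a rescaled space-time bump are forced by dimensional analysis to be $R^{n(1-1/r)}(R')^{n/(2r)}$ and $R^{n^2/(4+2n)}(R')^{n/(4+2n)}$.

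Finally I would close by a choice of constant. Inserting the two identities into~\eqref{small:all}, the first two lines of~\eqref{sysac} bound $\|a\|_{\dot B^{-1+n/r}_{r,\infty}}+\|a\|_{\dot H^{-1}}^2\|\chi\|_{Y_r}$ by $(1+C_1)\eta_0'$, while the third line bounds $\|a\|_{\dot H^{-1}}\|\chi\|_{L^{p}(\R^n\times\R^+)}$ by $C_2\eta_0'$. Taking $\eta_0'$ strictly smaller than $\eta_0/\max\{1+C_1,\,C_2\}$, with $\eta_0$ the threshold furnished by Theorem~\ref{th:theo1}, guarantees that both inequalities in~\eqref{small:all} hold, and the theorem applies verbatim to this $\chi$, yielding the matrix $(\sigma_{k\ell})$, the asymptotic identity~\eqref{eq:ct}, and the rapid-dissipation characterisation. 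I do not expect a genuine obstacle here: the single point requiring care is the bookkeeping of the scaling exponents and the verification that $C_1,C_2$ depend only on $\phi,\psi,n,r$, so that $\eta_0'$ has the claimed dependence.
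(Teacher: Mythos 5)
Your proposal is correct and matches the paper's approach: the paper itself dispatches this corollary as ``an immediate consequence of Theorem~\ref{th:theo1}'', and your scaling computations $\|\chi\|_{Y_r}=C_1R^{n(1-\frac1r)}(R')^{\frac{n}{2r}}$ and $\|\chi\|_{L^{(4+2n)/(4+n)}}=C_2R^{\frac{n^2}{4+2n}}(R')^{\frac{n}{4+2n}}$, with $C_1,C_2$ depending only on $\phi,\psi,n,r$, are exactly the bookkeeping that justifies that claim. Choosing $\eta_0'<\eta_0/\max\{1+C_1,C_2\}$ then makes \eqref{sysac} imply \eqref{small:all}, as you wrote.
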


It is important to realize that, in order to fulfill the size conditions~\eqref{sysac}, at least one of $R$ and $R'$ can be taken arbitrary large, no matter how large is $\|a\|_{\dot H^{-1}}$.
Thus, we can force the rapid dissipation effect by acting with a force on a bounded region in space-time, either during an arbitrarily small time interval, or otherwise during a arbitrarily small spatial region.

Theorem~\ref{th:theo1} and its corollary improve our previous result~\cite{BraOka} in several ways.
A first important improvement is the precise quantification of the magnitude and of the size of the support for the admissible control forces.
Moreover, the functional setting is more general and the required smallness condition on $a$ on the Besov norm $\dot B^{-1+n/r}_{r,\infty}(\R^n)$ 
is weaker the smallness condition on $\|a\|_n$.
Furthermore, the additional condition $a\in \dot B^{-(n+2)/2}_{2,c_0}(\R^n)$ for the last conclusion of the theorem is optimal, and much weaker
than the corresponding condition $a\in L^1(\R^n,(1+|x|)\dd x)$, with vanishing moments up to the first order, that we put in~\cite{BraOka}.
Lastly, in the present paper, the proof is more transparent and self-contained: indeed, rather than relying on Fujigaki and Miyakawa's asymptotic profiles, we deduce~\eqref{eq:ct} from an abstract result (Theorem~\ref{th:asy} below) that should have an independent interest.

\paragraph{Overview of the proof of Theorem~\ref{th:theo1}.\!\!\!}
The integral formulation of the Navier-Stokes equations is used in Section~\ref{sec:L2L2}, to establish sufficient conditions on $a$ and $f$,
in order to ensure the existence and the uniqueness of a solution to (NS), belonging to $u\in L^2(\R^n\times\R^+)$, so that the energy matrix~\eqref{EMi} of the flow is well defined.

Section~\ref{sec:revisit} is self-contained. It is devoted to the proof
of Theorem~\ref{th:asy}. Given a kernel $M$ with appropriate scaling properties and a function 
$\mathcal{W}\in L^1(\R^n\times \R^+)$, Theorem~\ref{th:asy}
provides the asymptotic profiles as $t\to+\infty$, in $L^q(\R^n)$-norms, of space-time integrals of the form
\[
(x,t)\mapsto \int_0^\infty\!\!\!\int_{\R^n} M(x-y,t-s)\mathcal{W}(y,s)\dd y\dd s.
\]
Section~\ref{sec:hea} is also self-contained and studies the decay properties of
the heat equation in terms of Besov spaces. It characterizes the Besov-type
space $\dot B^{-s}_{r,c_0}(\R^n)$ in terms of time decay for solutions of the heat equation.

In Section~\ref{sec:algo}, 
the control forcing term~$f$ is constructed as the limit of an inductively defined sequence of control forces. 
In this sequence, the velocity
field for the Navier-Stokes equations at step $m$
is obtained by solving the Navier--Stokes equations using the control force at the previous step.
At each step of the algorithm, the components of the control forcing term are suitable constant
multiples of a fixed function $\chi$: such constants at the step $m+1$ are defined in terms of the energy matrix of the solution at step~$m$.
Estimates
are produced to show that the sequence so constructed converges to a
solution to the Navier-Stokes equations.

The proof of Theorem~\ref{th:theo1} is given in Section~\ref{sec:conclusion}, and relies on the results of Sections~\ref{sec:L2L2}--\ref{sec:algo}. 
In particular, Theorem~\ref{th:asy} is applied with
$\mathcal{W} = u\otimes u-f$ and $M$ the kernel of $e^{t\Delta}\PP\text{div}$, where $\PP$ is the
Leray projector. 
The result of Section~\ref{sec:hea} is useful for the last assertion of~Theorem~\ref{th:theo1}.

\medskip
From now, the integral symbol $\int$ without limits will denote the integral in the whole space $\int_{\R^n}$.

\section{Constructing solutions in $L^2(\R^n\times \R^+)$}
\label{sec:L2L2}
Let us introduce the linear functional $\Phi$ defined by
\begin{equation}
\label{eq:Phi}
\Phi(f)(t)=\int_0^t e^{(t-s)\Delta}\PP\nabla\cdot f(s)\dd s.
\end{equation}
We also introduce also the bilinear operator
\begin{equation}
\label{Gop}
 G(u,v)(t)
=-\int_0^t e^{(t-s)\Delta}\PP \nabla\cdot(u\otimes v)(s)\text{\,d}s.
\end{equation}

Then the integral formulation of (NS) reads
\begin{equation}
 \label{NS-ab}
 u(t)=e^{t\Delta}a+\Phi(f)(t)+G(u,u)(t),
 \qquad\nabla\cdot a=0.
\end{equation}

Let us now introduce two spaces (the former for the velocity field and the latter for the forcing term): these are less commonly used in the Navier--Stokes 
literature than the usual spaces $X_r$ and $Y_r$, because they obey to a different scaling, that is not the natural one.
But these spaces will be very useful in establishing large time decay estimates.
For $1\le p\le \infty$, let
\[
    \begin{split}
    &\overline{X}_p
    =
    \bigl\{v\in L^\infty_{\rm loc}\bigl(\R^+;L^p(\R^n)\bigr)\colon 
        \|v\|_{\overline{X}_p}
        =\esup_{t>0}t^{\frac{1}{2}+\frac{n}{2}(\frac{1}{2}-\frac{1}{p})}
        \|v(t)\|_p<\infty
    \bigr\},\\
    &
    \overline{Y}_p
    =
    \bigl\{ f\in L^\infty_{\rm loc}\bigl(\R^+;L^{p}(\R^n)\bigr) \colon 
        \|f\|_{\overline Y_p}
        =\esup\limits_{t>0} t^{1+\frac{n}{2}(\frac{1}{2}-\frac{1}{p})}
        \|f(t)\|_p<\infty
    \bigr\}.
    \end{split}
\]
The space $\overline Y_q$ will be used only for $2n/(n+2)\le q\le\infty$.
\begin{lemma}
\label{lem:heat}
Let $p,r\in[1,\infty]$.
The heat semigroup operator $a\stackrel{S}{\mapsto} (t\mapsto e^{t\Delta}a)$ is a 
isomorphism from the following linear spaces into their respective ranges $R(S)$:
\begin{itemize}
\item[i)]
$S\colon \dot B^{-1+n/r}_{r,\infty}(\R^n)\to X_{r}$ \,for $n<r<\infty$,
\item[ii)]
$S\colon \dot H^{-1}(\R^n)\to L^2(\R^n\times \R^+)$ and 
\item[iii)]
$S\colon \dot B^{-1-n(\frac12-\frac1p)}_{p,\infty}\to \overline X_p$ \,for\, $0\le \frac1p<\frac 12+\frac1n$.
\end{itemize}
\end{lemma}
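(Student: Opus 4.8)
The plan is to reduce all three claims to the classical characterization of homogeneous Besov spaces of negative regularity in terms of the decay of the heat semigroup, and then to verify that the indices match in each case. Recall (see \cite[Chap.~2]{BCD}) that for $s<0$, $1\le r\le\infty$ and $1\le q\le\infty$ one has the norm equivalence
\[
\|a\|_{\dot B^{s}_{r,q}}\;\approx\;\Bigl\|\,t^{-s/2}\|e^{t\Delta}a\|_r\,\Bigr\|_{L^q(\R^+,\dd t/t)},
\]
the implicit constants depending only on $n$, $r$, $q$ and $s$. Taking $q=\infty$ turns the right-hand side into $\esup_{t>0}t^{-s/2}\|e^{t\Delta}a\|_r$, which is exactly the defining quantity of the target norm once the exponents are identified. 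For part~i) I would set $s=-1+n/r$, so that $-s/2=\tfrac12-\tfrac{n}{2r}$ matches the weight in $\|\cdot\|_{X_r}$; the hypothesis $n<r<\infty$ guarantees $s<0$. For part~iii) I would set $s=-1-n(\tfrac12-\tfrac1p)$, so that $-s/2=\tfrac12+\tfrac n2(\tfrac12-\tfrac1p)$ matches the weight in $\|\cdot\|_{\overline X_p}$; here the hypothesis $0\le\tfrac1p<\tfrac12+\tfrac1n$ is precisely what forces $s<0$, so the characterization applies. In both cases the equivalence is the assertion that $S$ is an isomorphism onto $R(S)$.

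For completeness I would sketch the two inequalities behind the master equivalence. For the upper bound, decompose $a=\sum_j\Delta_j a$ and use the spectral localization estimate $\|e^{t\Delta}\Delta_j a\|_r\lesssim e^{-ct2^{2j}}\|\Delta_j a\|_r$ together with $\|\Delta_j a\|_r\le 2^{-js}\|a\|_{\dot B^s_{r,\infty}}$; after the change of variable $\tau=t^{1/2}2^j$ the resulting sum is dominated by $\sum_j \tau^{-s}e^{-c\tau^2}$, which is bounded uniformly in $t$ because $s<0$ makes $\tau\mapsto\tau^{-s}e^{-c\tau^2}$ integrable against $\dd\tau/\tau$. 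For the lower bound, write $\Delta_j a=\widetilde\Delta_j\bigl(e^{2^{-2j}\Delta}a\bigr)$, where $\widetilde\Delta_j$ has symbol $\psi(2^{-j}\xi)e^{2^{-2j}|\xi|^2}$ with $\psi$ supported in an annulus; this symbol is a Fourier multiplier bounded on $L^r$ uniformly in $j$, whence $2^{js}\|\Delta_j a\|_r\lesssim (2^{-2j})^{-s/2}\|e^{2^{-2j}\Delta}a\|_r\le \esup_{t>0}t^{-s/2}\|e^{t\Delta}a\|_r$, and taking the supremum over $j\in\Z$ recovers the Besov norm.

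Part~ii) I would treat separately and directly, since it yields an exact identity rather than merely an equivalence. By Plancherel,
\[
\|e^{t\Delta}a\|_2^2=\int_{\R^n}e^{-2t|\xi|^2}|\widehat a(\xi)|^2\dd\xi,
\]
and integrating in $t$ over $\R^+$ and applying Tonelli gives
\[
\|S a\|_{L^2(\R^n\times\R^+)}^2
=\int_{\R^n}\Bigl(\int_0^\infty e^{-2t|\xi|^2}\dd t\Bigr)|\widehat a(\xi)|^2\dd\xi
=\tfrac12\int_{\R^n}|\xi|^{-2}|\widehat a(\xi)|^2\dd\xi
=\tfrac12\|a\|_{\dot H^{-1}}^2.
\]
Thus $S$ is, up to the factor $1/\sqrt2$, an isometry from $\dot H^{-1}(\R^n)$ onto $R(S)$.

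In all three cases the two-sided bound shows that $S$ is bounded, injective and has a bounded inverse on its range, which is the claimed isomorphism onto $R(S)$; surjectivity onto $R(S)$ is tautological. The only genuinely delicate point is the lower bound in the master equivalence, i.e.\ reconstructing the Littlewood--Paley blocks from the heat flow; but this is classical, and the negativity of the regularity index $s$---guaranteed in each case by the stated range of $r$ or $p$---is exactly what makes the relevant $\tau$-integrals converge.
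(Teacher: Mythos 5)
Your proof is correct and follows essentially the same route as the paper: parts i) and iii) are reduced to the classical heat-kernel characterization of negative-order homogeneous Besov spaces (the paper cites \cite[Theorem 2.34]{BCD} for exactly this), with the hypotheses on $r$ and $p$ serving precisely to make the regularity index negative, and part ii) is the same direct Plancherel--Tonelli computation giving $\|Sa\|_{L^2(\R^n\times\R^+)}^2=\tfrac12\|a\|_{\dot H^{-1}}^2$. The only difference is that you sketch the proof of the characterization (spectral localization for the upper bound, reconstruction of $\Delta_j a$ from the heat flow for the lower bound) where the paper simply cites it, which makes your argument more self-contained but not different in substance.
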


\begin{proof}

The  function $(x,t)\mapsto e^{t\Delta}a(x)$ belongs to $X_{r}$ if and only if $a$ belongs to $\dot B^{-1+n/r}_{r,\infty}(\R^n)$
by the well known Besov space characterisation via the heat kernel. See \cite[Theorem 2.34]{BCD}
where the norm equivalence $\|e^{t\Delta}a\|_{X_{r}}\approx\|a\|_{\dot B^{-1+n/r}_{r,\infty}}$ is 
also established. Thus Item~i) is well known.
In the same way, Item~ii) relies on the identification $\dot{H}^{-1}(\R^n)=\dot B^{-1}_{2,2}(\R^n)$ and the
heat kernel characterization of the latter space. But a direct proof is also possible:
indeed,
\[
    \int_0^\infty \|e^{t\Delta}a\|_2^2\dd t =
    \int_0^\infty\!\!\!\int e^{-2t|\xi|^2}|\widehat a(\xi)|^2\dd\xi\dd t
    ={\textstyle\frac{1}{2}}\int |\xi|^{-2}|\widehat a(\xi)|^2\dd\xi
\]
and the second assertion follows from the definition of $\dot H^{-1}(\R^n)$.
In particular, the function $(x,t)\mapsto e^{t\Delta}a(x)$ belongs to $L^2(\R^n\times \R^+)$ if and only if $a\in \dot H^{-1}(\R^n)$, and in this case
\[
    \int_0^\infty \|e^{t\Delta}a\|_2^2\dd t 
    ={\textstyle\frac{1}{2}} \|a\|_{\dot H^{-1}}^2.
\]
 Notice that \cite[Theorem 2.34]{BCD} can be used also to establish
the norm equivalence  $\|e^{t\Delta}a\|_{\overline X_p}\approx\|a\|_{\dot B^{-1-n(\frac12-\frac1p)}_{p,\infty}}$,
that proves Item iii). 

\end{proof}

\begin{lemma}
\label{lem:L2f}
Let $p,r,q,q_1,q_2\in [1,\infty]$.
The linear functional $\Phi$ defined in~\eqref{eq:Phi} is continuous 
\begin{itemize}

\item[i)]
$\Phi\colon Y_q\to X_{r}$,\, for 
$0\le \frac{1}{r}\le \frac{1}{q}<\frac{1}{r}+\frac{1}{n}$, with $q<\infty$.
\item[ii)] 
$\Phi\colon L^2(\R^+;\dot H^{-1}(\R^n))\to L^2(\R^n\times\R^+)$,
\item[iii)]
$\Phi\colon \overline Y_q\to \overline X_p$, \quad with 
$\frac 1p\le\frac1q<\frac1p+\frac1n$, \quad 
$\frac1q>\frac12$.
\item[(iv)] 
More in general, for $1\le q_1,q_2,p\le \infty$,
$\Phi\colon \overline Y_{q_1}\cap \overline Y_{q_2}\to \overline X_p$, \quad with $\frac 1p\le\frac1{q_1}<\frac1p+\frac1n$, \quad 
$\frac1{q_2}>\frac12$,\quad $\frac1{q_2}\ge \frac1p$.
In particular, if $0\le \frac1p\le \frac1n+\frac12$, then 
$\Phi\colon\overline Y_{2n/(n+2)}\cap\overline Y_p\to\overline X_p$.
\end{itemize}
\end{lemma}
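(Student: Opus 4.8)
The backbone of all four statements is a single smoothing estimate for the composed operator $e^{\tau\Delta}\PP\nabla\cdot$. Writing its convolution kernel as $F(\cdot,\tau)$, the Leray projector being a zero-homogeneous Fourier multiplier and $\nabla$ contributing one derivative, one checks by scaling that $F(\lambda\,\cdot\,,\lambda^2\tau)=\lambda^{-n-1}F(\cdot,\tau)$, whence $\|F(\cdot,\tau)\|_s=C\,\tau^{-\frac12-\frac n2(1-\frac1s)}$ for every $1\le s\le\infty$ (standard Oseen-type pointwise bounds guarantee $F(\cdot,1)\in L^s$). Young's convolution inequality with $\frac1s=1+\frac1p-\frac1q$ then gives
\[
\|e^{\tau\Delta}\PP\nabla\cdot g\|_p\le C\,\tau^{-\frac12-\frac n2(\frac1q-\frac1p)}\|g\|_q,
\qquad \tfrac1p\le\tfrac1q,
\]
which is the only spatial ingredient needed for (i), (iii) and (iv).

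For (i) and (iii) I would insert this bound into $\Phi(f)(t)=\int_0^t e^{(t-s)\Delta}\PP\nabla\cdot f(s)\dd s$, use the time-decay of $\|f(s)\|_q$ prescribed by the source norm, and reduce everything to the Beta-function identity $\int_0^t(t-s)^{-\alpha}s^{-\beta}\dd s=t^{1-\alpha-\beta}B(1-\alpha,1-\beta)$ valid for $\alpha,\beta<1$. For (i) one takes target index $r$ and $\beta=1-\frac n{2q}$, so that convergence at $s=0$ forces $q<\infty$ and convergence at $s=t$ forces $\alpha=\frac12+\frac n2(\frac1q-\frac1r)<1$, i.e. $\frac1q<\frac1r+\frac1n$; the output exponent is exactly $-\frac12+\frac n{2r}$, matching the $X_r$-weight. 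For (iii) one takes $\beta=1+\frac n2(\frac12-\frac1q)$, so the $s=0$ constraint is precisely $\frac1q>\frac12$ and the $s=t$ constraint is $\frac1q<\frac1p+\frac1n$, while the resulting power $-\frac12+\frac n{2p}-\frac n4$ again cancels the $\overline X_p$-weight. In both cases the two endpoint conditions of the lemma are seen to be \emph{exactly} the two Beta-integrability constraints, and $\frac1p\le\frac1q$ (resp. $\frac1r\le\frac1q$) is what makes Young applicable.

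Item (ii) is the one point requiring a different argument, since the $L^p$--$L^q$ estimate above is too lossy at the $L^2$ endpoint. Here I would pass to Fourier variables: by Plancherel,
\[
\int_0^\infty\|\Phi(f)(t)\|_2^2\dd t
=\int_{\R^n}\int_0^\infty\Bigl|\int_0^t e^{-(t-s)|\xi|^2}\,\widehat{\PP\nabla\cdot f}(\xi,s)\dd s\Bigr|^2\dd t\,\dd\xi.
\]
For each fixed $\xi$ the inner object is the time-convolution of $\widehat{\PP\nabla\cdot f}(\xi,\cdot)$ with $\tau\mapsto e^{-\tau|\xi|^2}\mathbf 1_{\tau>0}$, whose $L^1_t$-norm is $|\xi|^{-2}$, so Young's inequality in the time variable yields a factor $|\xi|^{-4}$. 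Since $|\widehat{\PP\nabla\cdot f}(\xi,s)|\le|\xi|\,|\widehat f(\xi,s)|$ (the projector symbol has norm $\le1$), the two spare powers of $|\xi|$ combine to exactly the $\dot H^{-1}$-weight $|\xi|^{-2}$, and integrating in $\xi$ gives $\int_0^\infty\|\Phi(f)(t)\|_2^2\dd t\le\int_0^\infty\|f(s)\|_{\dot H^{-1}}^2\dd s$. This exact cancellation is the crux, and is why the statement is a clean continuity of matched scaling rather than a lossy embedding.

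Finally (iv) follows from the same kernel estimate by splitting $\int_0^t=\int_0^{t/2}+\int_{t/2}^t$. On $[t/2,t]$, where the kernel singularity at $s=t$ must be integrable, I would use the index $q_1$: this is where $\frac1p\le\frac1{q_1}<\frac1p+\frac1n$ enters, while $s\simeq t$ makes the time-decay harmless. On $[0,t/2]$, where $(t-s)\simeq t$ removes the singularity, I would use $q_2$: here $\frac1{q_2}>\frac12$ is precisely integrability of $s^{-\beta}$ near $0$ and $\frac1{q_2}\ge\frac1p$ keeps Young valid. A direct computation shows both halves produce the same power $t^{-\frac12+\frac n{2p}-\frac n4}$, so $\Phi(f)\in\overline X_p$. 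The displayed special case is recovered by choosing $q_1=p$ (legitimate since $\frac1p<\frac1p+\frac1n$) and $q_2=2n/(n+2)$, for which $\frac1{q_2}=\frac12+\frac1n>\frac12$ and the hypothesis $\frac1p\le\frac12+\frac1n$ gives $\frac1{q_2}\ge\frac1p$. The main obstacle throughout is not any individual estimate but checking that the scaling exponents close exactly; the lemma's hypotheses are nothing but the conditions under which the relevant Beta integrals converge at both endpoints.
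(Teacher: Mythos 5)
Your proposal is correct and follows essentially the same route as the paper: the same scaling identity $\|F(\cdot,\tau)\|_s=C\,\tau^{-\frac{n+1}{2}+\frac{n}{2s}}$ for the kernel of $e^{\tau\Delta}\PP\nabla\cdot$ combined with Young's inequality and Beta-type integrals for items (i) and (iii), the same Plancherel argument with the one-dimensional $L^1$-$L^2$ Young inequality in time (the kernel $\tau\mapsto|\xi|e^{-\tau|\xi|^2}$ having $L^1_\tau$-norm $|\xi|^{-1}$) for item (ii), and the same splitting of the time integral at $t/2$, with $q_1$ handling the kernel singularity near $s=t$ and $q_2$ the decay near $s=0$, for item (iv). The exponent bookkeeping and the identification of the lemma's hypotheses with the endpoint integrability constraints are all accurate, including the verification of the special case $q_1=p$, $q_2=2n/(n+2)$.
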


\begin{proof}
Let us recall that the kernel $F(\cdot,t)$ of $e^{t\Delta}\PP{\rm div}$ enjoys the scaling
properties
\[ F(\cdot,t)=t^{-\frac{n+1}{2}}F(\cdot/\sqrt t,1), \]
with $F(\cdot,1)\in L^1\cap L^\infty(\R^n)$. 
Moreover, $F(\cdot,1)$ is smooth and all its derivatives are bounded in $\R^n$ 
and decay to~$0$ at the spatial infinity. See, e.g., \cite[Proposition~11.1]{Lem}.
Therefore,
\begin{equation}
    \label{Lano}
    \|F(\cdot,t)\|_\alpha
    =\|F(\cdot,1)\|_\alpha \, t^{-\frac{n+1}{2}+\frac{n}{2\alpha}},
    \qquad
    \text{for all $1\le \alpha\le \infty$}.
\end{equation}

For the first assertion, let us observe that,
\[
    \Bigl\| \int_0^t e^{(t-s)\Delta}\PP\nabla\cdot f(s)\dd s\Bigr\|_{r}
    \lesssim 
    \int_0^t (t-s)^{-\frac{1}{2}-\frac{n}{2}(\frac{1}{q}-\frac{1}{r})}\|f(s)\|_q\dd s, 
\]
 where we applied~\eqref{Lano} 
 with $1+\frac{1}{r}=\frac{1}{\alpha}+\frac{1}{q}$. 
 From this, we deduce that there exists a constants  $\kappa$, only dependent on the space dimension $n$ and $r,q$,
such that, for any $f\in Y_n$,
\[
\|\Phi(f)\|_{X_r}\le \kappa\|f\|_{Y_q}.
 \]
 
Let us prove our second assertion. 
Indeed, for a.e. $\xi\in\R^n$,
\[
|\widehat{\Phi(f)}(\xi,t)| \lesssim \int_0^t e^{-(t-s)|\xi|^2}|\xi|\,
|\widehat f(\xi,s)|\dd s.
\]
For a.e. $\xi\in \R^n$, the function of the $t$-variable on the right-hand side is the convolution product in $\R$ of the 
function $t\mapsto |\xi|e^{t|\xi|^2}{\bf 1}_{\R^+}(t)$, with the function
$t\mapsto |\widehat f(\xi,t)|{\bf 1}_{\R^+}(t)$.
By $L^1$-$L^2$ Young inequality in one-dimension,
\[
\begin{split}
\int_0^\infty |\widehat{\Phi(f)}(\xi,t)|^2\dd t 
&\lesssim
\Bigl(\int_0^\infty|\xi| e^{-t|\xi|^2}\dd t\Bigr)^2
\Bigl( 
\int_0^\infty |\widehat f(\xi,t)|^2\dd t
\Bigr)\\
&=\int_0^\infty |\xi|^{-2}|\widehat f(\xi,t)|^2\dd t.
\end{split}
\] 
The second assertion now follows integrating over $\xi\in \R^n$ and
applying Parseval identity:
in particular, there exists a constants  $\kappa_n$, only dependent on the space dimension $n$, such that, 
for any $f\in L^2\bigl(\R^+;\dot H^{-1}(\R^n)\bigr)$,
\[
\int_0^\infty \|\Phi(t)\|_2^2\dd t\le \kappa_n \int_0^\infty \|f(t)\|_{\dot H^{-1}}^2\dd t.
\]

The third assertion is proved applying~\eqref{Lano} 
with $1+\frac{1}{p}=\frac{1}{\alpha}+\frac{1}{q}$ and $p\ge q$:
\[
    \begin{split}
    \|\Phi(f)(\cdot,t)\|_p
    &\lesssim 
    \int_0^t (t-s)^{-\frac{1}{2}-\frac{n}{2}(\frac{1}{q}-\frac{1}{p})}
        s^{-1-\frac{n}{2}(\frac{1}{2}-\frac{1}{p})} \dd s\,\|f\|_{\overline Y_q}\\
    &\lesssim
    t^{-\frac{1}{2}-\frac{n}{2}(\frac{1}{2}-\frac{1}{p})}\,\|f\|_{\overline Y_q}.
\end{split}
\] 
The conditions on $p,q$ insure that the above integral converges.

Let us prove our last assertion.
If $f\in \overline Y_{q_1}\cap \overline Y_{q_2}$, then we can split
the intergral at $t/2$ before applying twice Young's convolution estimate
and we get
\[
    \begin{split}
    \|\Phi(f)(\cdot,t)\|_p
    &\lesssim
    t^{-\frac{1}{2}-\frac{n}{2}(\frac{1}{q_2}-\frac{1}{p})} 
    \int_0^{t/2} s^{-1-\frac{n}{2}(\frac{1}{2}-\frac{1}{q_2})}
    \dd s\,\|f\|_{\overline Y_{q_2}}\\
    &\qquad\qquad\qquad
    +t^{-1-\frac{n}{2}(\frac{1}{2}-\frac{1}{q_1})}
    \int_{t/2}^t (t-s)^{-\frac{1}{2}-\frac{n}{2}(\frac{1}{q_1}-\frac{1}{p})}
    \dd s \|f\|_{\overline Y_{q_1}}\\
    &\lesssim
    t^{-\frac{1}{2}-\frac{n}{2}(\frac{1}{2}-\frac{1}{p})}
    \,\bigl(\|f\|_{\overline Y_{q_1}}+\|f\|_{\overline Y_{q_2}}\bigr)
\end{split}
\]
and the two integrals converge under the conditions we put on $p$, $q_1$ and $q_2$. 
\end{proof}

\begin{remark}
\label{rem:vari}
There are many possible variants to conclusion ii).
For example, the linear operator $\Phi$ is also continuous as a map 
\[
    \Phi\colon L^{\alpha}\bigl(\R^+;L^\beta(\R^n)\bigr)\to L^2(\R^n\times\R^+),
    \qquad 1<\alpha< 2,\quad 1\le \beta\le 2,\quad
    \frac{1}{\alpha}+\frac{n}{2\beta}=1+\frac{n}{4}.
\]
In particular, choosing $\alpha=\beta$.
\[
    \Phi\colon L^{\frac{4+2n}{4+n}}(\R^n\times\R^+)\to L^2(\R^n\times\R^+),
\]
Indeed,
\[
    \|\Phi(f)(t)\|_{L^2(\R^n)}
    \lesssim 
    \int_0^t (t-s)^{-\frac{1}{2}-\frac{n}{2}(\frac{1}{\beta}-\frac{1}{2})}
    \|f(s)\|_\beta\dd s.
\]
Now, the map $t\mapsto t^{-1/2+(n/2)(1/2-1/\beta)}{\bf 1}_{\R^+}$ belongs to the 
Lorentz space
$L^{\delta,\infty}(\R)$, with $\frac1\delta=\frac12-\frac n2(\frac12-\frac1\beta)$.
But, by the relation between $\alpha$ and $\beta$, we have $1+\frac12=\frac1\delta+\frac1\alpha$. 
When $f\in L^{\alpha}\bigl(\R^+;L^\beta(\R^n)\bigr)$, the map 
$t\mapsto \|f(t)\|_\beta{\bf 1}_{\R^+}$ belongs to $L^\alpha(\R)\subset L^{\alpha,2}(\R)$ and so 
$t\mapsto \|\Phi(f)(t)\|_2$
belongs to $L^2(\R^+)$ by the Young convolution inequality in Lorentz spaces (see \cite[Proposition~2.4]{Lem}).
This will be especially useful when $n=2$: in the two-dimensional case
the condition $f\in L^2\bigl(\R^+;\dot H^{-1}(\R^2)\bigr)$ is somehow too stringent,
but such a condition can conveniently be replaced, e.g., by the condition $f\in L^{4/3}(\R^2\times\R^+)$.

\end{remark}

\begin{lemma}
\label{lem:mix}
The 
bilinear operator is continuous:
\begin{itemize}
\item[i)]
$G\colon X_r\times X_r\to X_r$, \quad $n<r<\infty$.
\item[ii)]
$
G\colon L^2\bigl(\R^+;L^2(\R^n)\bigr)\times X_r\to L^2\bigl(\R^+;L^2(\R^n)\bigr)
$, \, for $2\le n<r<\infty$, and
\item[iii)]
$G\colon (\overline X_2\cap \overline X_p)\times X_{r}\to (\overline X_2\cap \overline X_p)$,
\quad for $0\le \frac1p\le(\frac12+\frac1r)\wedge (1-\frac1r)$
and $0<\frac1r<\frac1n$.
\end{itemize}
\end{lemma}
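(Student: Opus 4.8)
The plan is to reduce all three statements to the linear estimates of Lemma~\ref{lem:L2f} through the elementary identity $G(u,v)=-\Phi(u\otimes v)$, with $\Phi$ as in~\eqref{eq:Phi}. The bilinear structure then enters only through Hölder's inequality applied pointwise in time to the tensor product $u\otimes v$; the whole point is to land $u\otimes v$ in the forcing space that $\Phi$ maps into the desired target, the scaling exponents being chosen so that the powers of $t$ cancel exactly against the weights defining the $Y$- and $\overline Y$-norms.

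For assertion i), if $u,v\in X_r$ then $\|(u\otimes v)(t)\|_{r/2}\le\|u(t)\|_r\|v(t)\|_r\le\|u\|_{X_r}\|v\|_{X_r}\,t^{-1+n/r}$, so that $u\otimes v\in Y_{r/2}$ with $\|u\otimes v\|_{Y_{r/2}}\le\|u\|_{X_r}\|v\|_{X_r}$. Applying Lemma~\ref{lem:L2f}~i) with $q=r/2$ then gives $G(u,v)\in X_r$; the admissibility condition $2/r<1/r+1/n$ is exactly $r>n$. For assertion iii) I would produce two separate bounds, one for each factor of the intersection $\overline X_2\cap\overline X_p$. Setting $1/q_2=1/2+1/r$ and $1/q_1=1/p+1/r$, the same pointwise Hölder estimate yields $u\otimes v\in\overline Y_{q_2}$ with norm $\lesssim\|u\|_{\overline X_2}\|v\|_{X_r}$ and $u\otimes v\in\overline Y_{q_1}$ with norm $\lesssim\|u\|_{\overline X_p}\|v\|_{X_r}$. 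The $\overline X_2$-bound follows from Lemma~\ref{lem:L2f}~iii) applied to $\overline Y_{q_2}\to\overline X_2$, whose hypotheses $1/2\le 1/q_2<1/2+1/n$ and $1/q_2>1/2$ again reduce to $r>n$; the $\overline X_p$-bound follows from Lemma~\ref{lem:L2f}~iv) applied to $\overline Y_{q_1}\cap\overline Y_{q_2}\to\overline X_p$. Checking the conditions $1/p\le 1/q_1<1/p+1/n$, $1/q_2>1/2$ and $1/q_2\ge 1/p$ of iv), I expect them to match the stated hypotheses exactly: $1/q_1<1/p+1/n\Leftrightarrow r>n$, the inequality $1/q_2\ge 1/p$ is $1/p\le 1/2+1/r$, and the requirement $q_1\ge1$ (needed for the Hölder product and for $q_1$ to be an admissible exponent) is $1/p\le 1-1/r$; together these are precisely the constraint $0\le 1/p\le(1/2+1/r)\wedge(1-1/r)$.

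The main obstacle is assertion ii), because here the scaling does not cooperate: for $v\in X_r$ with $r>n$ the fixed-time product $u\otimes v$ lies in $L^{2r/(r+2)}$, a space strictly larger than the critical one $L^{2n/(n+2)}\hookrightarrow\dot H^{-1}$, so the route through Lemma~\ref{lem:L2f}~ii) with a fixed-time $\dot H^{-1}$-bound is unavailable. Instead I would estimate $G(u,v)$ directly. Using the scaling~\eqref{Lano} of the kernel $F$ of $e^{t\Delta}\PP\mathrm{div}$, together with Young's convolution inequality (with $1/\alpha+1/\beta=3/2$, $1/\beta=1/2+1/r$, hence $\alpha=r/(r-1)$) and Hölder in space, one gets
\[
\|G(u,v)(t)\|_2\lesssim\|v\|_{X_r}\int_0^t(t-s)^{-\frac12-\frac{n}{2r}}\,s^{-\frac12+\frac{n}{2r}}\,\|u(s)\|_2\dd s.
\]
The kernel $K(t,s)=(t-s)^{-\frac12-\frac{n}{2r}}\,s^{-\frac12+\frac{n}{2r}}\,\mathbf 1_{\{0<s<t\}}$ is nonnegative and homogeneous of degree $-1$, so the associated integral operator is bounded on $L^2(\R^+)$, with operator norm at most
\[
\int_0^\infty K(1,s)\,s^{-1/2}\dd s=\int_0^1(1-s)^{-\frac12-\frac{n}{2r}}\,s^{-1+\frac{n}{2r}}\dd s=B\Bigl(\tfrac{n}{2r},\,\tfrac12-\tfrac{n}{2r}\Bigr),
\]
by Schur's test with the weight $s\mapsto s^{-1/2}$ (equivalently, via the Mellin transform). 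This Beta integral is finite precisely when $0<\frac{n}{2r}<\frac12$, that is $n<r<\infty$, which is exactly the hypothesis. Since $t\mapsto\|u(t)\|_2$ belongs to $L^2(\R^+)$, this yields $\|G(u,v)\|_{L^2(\R^n\times\R^+)}\lesssim\|u\|_{L^2(\R^n\times\R^+)}\|v\|_{X_r}$ and establishes ii). The condition $2\le n$ guarantees $\frac{n}{2r}>0$, while $r>n$ guarantees $\frac{n}{2r}<\frac12$, so both endpoints of the Beta function are strictly admissible.
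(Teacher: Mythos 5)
Your proof is correct, but it takes a genuinely different route from the paper's. For items i) and iii) you reduce everything to the linear estimates through the identity $G(u,v)=-\Phi(u\otimes v)$ plus pointwise-in-time H\"older, so that i) becomes Lemma~\ref{lem:L2f}~i) with $q=r/2$, and iii) becomes Lemma~\ref{lem:L2f}~iii)--iv) with $1/q_2=\frac12+\frac1r$, $1/q_1=\frac1p+\frac1r$; your bookkeeping is accurate, and the exponent conditions do translate exactly into $r>n$ and $0\le \frac1p\le(\frac12+\frac1r)\wedge(1-\frac1r)$ as you claim. The paper instead quotes the classical Kato bilinear estimate \cite[Lemma 5.29]{BCD} for i), and proves iii) by hand, splitting the time integral at $t/2$ --- which essentially repeats the computation already done inside Lemma~\ref{lem:L2f}~iv); your organization avoids that duplication. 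The real divergence is in item ii), where you correctly identify the obstacle (no fixed-time $\dot H^{-1}$ control of $u\otimes v$): both arguments start from the same bound
\[
\|G(u,v)(t)\|_2\lesssim\|v\|_{X_r}\int_0^t(t-s)^{-\frac12-\frac{n}{2r}}\,s^{-\frac12+\frac{n}{2r}}\,\|u(s)\|_2\dd s,
\]
but the paper concludes with Lorentz-space machinery (H\"older in Lorentz spaces to place $s\mapsto s^{-\frac12+\frac{n}{2r}}\|u(s)\|_2$ in $L^{2r/(2r-n),2}(\R^+)$, then Young's convolution inequality in Lorentz spaces), whereas you exploit the $(-1)$-homogeneity of the kernel and apply the Schur test with weight $s^{-1/2}$ (Hardy--Littlewood--P\'olya), obtaining the explicit constant $B\bigl(\frac{n}{2r},\frac12-\frac{n}{2r}\bigr)$; both Schur conditions indeed reduce by homogeneity to the same Beta integral, so this step is sound. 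Your route is more elementary and self-contained, while the paper's choice is consistent with its use of Lorentz spaces elsewhere (cf.\ Remark~\ref{rem:vari}). One small slip in your closing sentence: the hypothesis $n\ge2$ is not what makes $\frac{n}{2r}>0$ (any $n\ge1$ does that); its actual role, combined with $r>n$, is to ensure $r>2$, hence that the H\"older exponent $\beta=\frac{2r}{r+2}$ satisfies $\beta\ge1$, which is what legitimizes the space H\"older/Young step leading to the displayed bound.
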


\begin{proof}
The first assertion follows from the estimate
\begin{equation}
 \label{kato-est}
 \|G(u,v)\|_{X_{r}}\lesssim \|u\|_{X_{r}}\|v\|_{X_{r}}.
\end{equation}
This is well known. In fact, the above bilinear estimate in Kato's space
is just one particular instance of the more general boundedness property  of the bilinear operator in Kato's space
 $G\colon X_p\times X_q\to X_r$, with $0<\frac 1p+\frac1q\le 1$ and
$\frac1r\le \frac1p+\frac1q<\frac1r+\frac1n$.
(See~\cite[Lemma 5.29]{BCD} at least for $n=3$, but the method goes through for $n\ge2$).

Let us prove the second assertion.
Let $r'$ be the conjugate exponent of $r$. Notice that $r'<2<r$ and 
 that $\frac{1}{2}+1=\frac{1}{r'}+(\frac12+\frac1r)$.
Combining the H\"older and the Young inequality we find
\[
    \begin{split}
    \|G(u,v)(\cdot,t)\|_{2}
    &\le \int_0^t \|F(t-s)\|_{r'}\|u(s)\|_{2}\|v(s)\|_r\dd s\\
    &\lesssim \|v\|_{X_r}\int_0^t (t-s)^{-\frac12-\frac{n}{2r}}\|u(s)\|_2 \, s^{-\frac12+\frac{n}{2r}}\dd s.
    \end{split}
\]
But $s\mapsto \|u(s)\|_2$ belongs to $L^2(\R^+)$ and 
$s\mapsto s^{-\frac12+\frac{n}{2r}}$ belongs to $L^{2r/(r-n),\infty}(\R^+)$.
Therefore the function $s\mapsto \|u(s)\|_2s^{-\frac12+\frac{n}{2r}}$
belongs to $L^{(2r)/(2r-n),2}(\R^+)$ by the H\"older inequality in Lorentz spaces.
On the other hand, the function $s\mapsto s^{-\frac12-\frac{n}{2r}}$ belongs
to $L^{(2r)/(r+n),\infty}(\R^+)$.
By the Young convolution inequality in Lorentz space~\cite[Chap. 2]{Lem}, 
$L^{(2r)/(2r-n),2}*L^{(2r)/(r+n),\infty}\subset L^{2,2}=L^2$.
Thus, the integral on the right-hand side, as a function of~$t$, belongs to $L^2(\R^+)$.
It then follows that
\[
\|G(u,v)\|_{L^2(\R^n\times\R^+)}=
\Bigl(\int_0^\infty \|G(u,v)(\cdot,t)\|_2^2\dd t\Bigr)^{1/2}
\lesssim \|u\|_{L^2(\R^n\times \R^+)}\,\|v\|_{X_r}.
\]

The third assertion is proved as follows: let 
$\frac{1}{p}\le\frac{1}{2}+\frac{1}{r}$,\, 
$\frac{1}{r}+\frac{1}{p}\le1$, and $0<\frac{1}{r}<\frac{1}{n}$.
Then
\[
    \begin{split}
    \|G(u,v)(\cdot,t)\|_p
    &\le 
    \int_0^{t/2}(t-s)^{-\frac{1}{2}-\frac{n}{2}(\frac{1}{2}+\frac{1}{r}-\frac{1}{p})}
        \|u(s)\|_2\|v(s)\|_r\dd s
    +
    \int_{t/2}^t (t-s)^{-\frac{1}{2}-\frac{n}{2r}}
        \|u(s)\|_p\|v(s)\|_r\dd s \\
    &\lesssim
    t^{-\frac{1}{2}-\frac{n}{2}(\frac{1}{2}+\frac{1}{r}-\frac{1}{p})}
        \int_0^{t/2} s^{-1+\frac{n}{2r}} \dd s \, 
        \|u\|_{\overline X_2}\|v\|_{X_r}\\
    &\qquad\qquad
    +
    \int_{t/2}^t (t-s)^{-\frac{1}{2}-\frac{n}{2r}}
    s^{-1+\frac{n}{2r}-\frac{n}{2}(\frac{1}{2}-\frac{1}{p})}
    \dd s\,\|u\|_{\overline X_p}\|v\|_{X_r}\\
    &\lesssim
    t^{-\frac{1}{2}-\frac{n}{2}(\frac{1}{2}-\frac{1}{p})}
    \Bigl( \|u\|_{\overline X_2} \|v\|_{X_r}+ \|u\|_{\overline X_p}\|v\|_{X_r}\Bigr).
    \end{split}
\]
Notice that $p=2$ is allowed in the above calculations.
The last assertion of the Lemma follows.
\end{proof}

We can now state the main result of this section. (The first item is classical):

\begin{proposition}
 \label{prop:exi}
 Let $2\le n<r<\infty$. 
 There exists $\eta>0$ and a constant $C_0>0$ such that:
\begin{itemize}
\item[i)] if $a\in \dot B^{-1+n/r}_{r,\infty}(\R^n)$,  $f\in Y_r$ and
\begin{equation}
 \label{eq: lll}
 \tag{A}
 \|a\|_{\dot B^{-1+n/r}_{r,\infty}}+\|f\|_{Y_r}<\eta,
\end{equation}
then there exists a solution $u\in X_{r}\cap X_\infty$ of (NS), unique in the ball 
$\{v\in X_r\colon \|v\|_{X_r}<2\eta\}$, such that
 \begin{equation}
 \label{eq:boun}
 \begin{split}
    & \|u\|_{X_r}
        \le C_0 \bigl( \|a\|_{\dot B^{-1+n/r}_{r,\infty}}+\|f\|_{Y_r}\bigr),\\
    &\|u\|_{X_\infty}
        \le C_0  \bigl( \|a\|_{\dot B^{-1+n/r}_{r,\infty}}+\|f\|_{Y_r}\bigr).
    \end{split}
 \end{equation}
\item[ii)] If $a$ and $f$ satisfy the condition of the first item
and, moreover, $a\in \dot H^{-1}(\R^n)$, 
$f\in L^2\bigl(\R^+;\dot H^{-1}(\R^n)\bigr)$, then
\begin{equation}
\label{eq:bound-u}
    \|u\|_{L^2(\R^n\times \R^+)} 
    \le {\textstyle\frac{4}{5}}\,\|a\|_{\dot H^{-1}}
    +C_0\|f\|_{L^2(\R^+;\dot H^{-1})}.
\end{equation}
The condition $f\in  L^2(\R^+;\dot H^{-1}(\R^n))$ can be replaced by
the condition $f\in L^{(4+2n)/(4+n)}(\R^n\times\R^+)$ modifying estimate~\eqref{eq:bound-u} accordingly.
\item[iii)] 
Furthermore, for $2 \leq p \leq \infty$, there exists $\eta^\prime\in (0,\eta)$
such that if $a \in \dot{H}^{-1}(\R^n)\cap \dot{B}^{-1+n/r}_{r,\infty}(\R^n)$, 
 $f \in \overline{Y}_1\cap \overline{Y}_p$, $f\in Y_r$ and if
\begin{equation*}
    \|a\|_{\dot{B}^{-1+n/r}_{r,\infty}} + \|f\|_{{Y}_r} <\eta^\prime,
\end{equation*}
then additionally $u \in \overline{X}_p$ and 
\begin{equation}
\label{ezp}
    \|u\|_{\overline X_p}
    \le C \bigl( \|a\|_{\dot H^{-1}} 
        +\|f\|_{\overline Y_1} + \|f\|_{\overline Y_p}\bigr).
\end{equation}
\end{itemize}
\end{proposition}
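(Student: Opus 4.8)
The plan is to solve the fixed-point problem only once, in Kato's space $X_r$, and then to \emph{propagate} the extra integrability of item ii) and the extra decay of item iii) to that same solution; by the uniqueness statement of item i) no second solution is ever produced.

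For item i) I would run the standard Picard/Banach scheme on the integral equation~\eqref{NS-ab}. The map $v\mapsto e^{t\Delta}a+\Phi(f)+G(v,v)$ preserves $X_r$, since $\|e^{t\Delta}a\|_{X_r}\lesssim\|a\|_{\dot B^{-1+n/r}_{r,\infty}}$ by Lemma~\ref{lem:heat} i), $\|\Phi(f)\|_{X_r}\lesssim\|f\|_{Y_r}$ by Lemma~\ref{lem:L2f} i) with $q=r$, and $G$ is bounded on $X_r$ by Lemma~\ref{lem:mix} i). The usual small-data bilinear lemma then gives, under~\eqref{eq: lll} with $\eta$ small, the unique solution in a small $X_r$-ball together with the first bound in~\eqref{eq:boun}. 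The $X_\infty$ bound is the classical smoothing property: the linear part already lies in every $X_{r'}$, $r'\in[r,\infty]$ (using the embeddings $\dot B^{-1+n/r}_{r,\infty}\hookrightarrow\dot B^{-1+n/r'}_{r',\infty}$ and the corresponding range of Lemma~\ref{lem:L2f} i)), and a finite bootstrap on the general Kato estimate $G\colon X_s\times X_s\to X_{s'}$ recalled in the proof of Lemma~\ref{lem:mix} lowers $1/s$ toward $0$ and reaches $X_\infty$.

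Items ii) and iii) rest on the same mechanism, which I think of as a \emph{decoupling}: in each larger space the quadratic term $G(u,u)$ is estimated \emph{linearly} in the larger norm, with a coefficient proportional to the small quantity $\|u\|_{X_r}$. Concretely, for item ii) I would bound the three terms of~\eqref{NS-ab} in $L^2(\R^n\times\R^+)$ using Lemma~\ref{lem:heat} ii) (which gives exactly $\|e^{t\Delta}a\|_{L^2(\R^n\times\R^+)}=\tfrac1{\sqrt2}\|a\|_{\dot H^{-1}}$), Lemma~\ref{lem:L2f} ii), and Lemma~\ref{lem:mix} ii), the last yielding $\|G(u,u)\|_{L^2(\R^n\times\R^+)}\le C_1\|u\|_{L^2(\R^n\times\R^+)}\|u\|_{X_r}$. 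Choosing $\eta$ so small that $C_1\|u\|_{X_r}\le\theta$ with $\tfrac1{\sqrt2\,(1-\theta)}\le\tfrac45$ lets me absorb the quadratic term and land exactly on~\eqref{eq:bound-u}; the $L^{(4+2n)/(4+n)}$ variant is obtained by replacing Lemma~\ref{lem:L2f} ii) with Remark~\ref{rem:vari}. For item iii) I would work in $W=\overline X_2\cap\overline X_p$: the linear part lies in $W$, bounded by $\|a\|_{\dot H^{-1}}+\|f\|_{\overline Y_1}+\|f\|_{\overline Y_p}$, via Lemma~\ref{lem:heat} iii) together with the embedding $\dot H^{-1}=\dot B^{-1}_{2,2}\hookrightarrow\dot B^{-1-n(\frac12-\frac1p)}_{p,\infty}$ (valid for $p\ge2$) and via Lemma~\ref{lem:L2f} iv), after noting $\overline Y_1\cap\overline Y_p\subset\overline Y_{2n/(n+2)}$ by H\"older interpolation of the time-weighted norms (since $1\le\tfrac{2n}{n+2}\le p$); then Lemma~\ref{lem:mix} iii) gives $\|G(u,u)\|_W\le C_1\|u\|_W\|u\|_{X_r}$ and the same absorption yields~\eqref{ezp}.

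The main obstacle is making the absorption legitimate: since $\|a\|_{\dot H^{-1}}$ and $\|f\|_{\overline Y_1}+\|f\|_{\overline Y_p}$ are \emph{not} assumed small, the small-data fixed-point lemma is unavailable in $L^2(\R^n\times\R^+)$ or in $W$, and the inequality $\|u\|\le(\text{linear part})+C_1\|u\|_{X_r}\|u\|$ may not be subtracted until one knows a priori that $\|u\|$ is finite in the larger space. I would remove this circularity through the Picard iterates $u_0=e^{t\Delta}a+\Phi(f)$, $u_{m+1}=u_0+G(u_m,u_m)$, which converge to $u$ in $X_r$ with $\|u_m\|_{X_r}$ uniformly small: an induction using $C_1\|u_m\|_{X_r}\le\tfrac12$ gives a uniform bound for $\|u_m\|$ in the larger space, and a companion estimate for $u_{m+1}-u_m$ (its $X_r$-differences decay geometrically and enter the larger-norm differences with a small coefficient) shows the iterates are Cauchy there as well; the limit coincides with $u$, which therefore inherits all the stated bounds. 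Tracking the constants to reach precisely $\tfrac45$ in~\eqref{eq:bound-u} is then routine bookkeeping.
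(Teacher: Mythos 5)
Your proposal is correct and follows essentially the same route as the paper's proof: a small-data fixed point in $X_r$, followed by propagation of the $L^2(\R^n\times\R^+)$ and $\overline X_2\cap\overline X_p$ bounds through the Picard iterates, using precisely the linear-in-the-large-norm structure of items ii) and iii) of Lemmas~\ref{lem:heat}, \ref{lem:L2f} and~\ref{lem:mix}, with the quadratic term absorbed thanks to the smallness of $\|u_m\|_{X_r}$ --- exactly the paper's resolution of the circularity you identify. The only cosmetic differences are that the paper reaches $X_\infty$ in one step via the bound $G\colon X_\infty\times X_r\to X_\infty$ applied to the iterates rather than by a finite bootstrap through intermediate $X_s$ spaces, and that it records only the uniform bounds on the iterates before passing to the limit, whereas you additionally verify the Cauchy property in the larger spaces.
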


\begin{proof}
The first statement is just the application of Item i) of our three previous Lemmas and the usual fixed point theorem \cite[Theorem 13.2]{Lem}, in the space $X_r$.
For this one needs to choose $\eta>0$ such that 
\[
4\eta\sup\|G(u,v)\|_{X_r}<1,
\]
where the supremum is taken over all $u,v\in X_r$ with $\|u\|_{X_r}=\|v\|_{X_r}=1$.
This supremum is finite because of~\eqref{kato-est}.
The solution $u$ is obtained as the limit, in the $X_r$-norm, of the sequence of approximate solutions
\begin{equation}
\label{iterkato}
u_{k+1}=u_0+\Phi(f)+G(u_k,u_k),\qquad
k\in\N,
\qquad
\text{with \; }
u_0(t)=e^{t\Delta}a.
\end{equation}
Of course, these approximate solutions are bounded, in the $X_r$-norm by the right-hand side of~\eqref{eq:boun}.
But, as observed right after~\eqref{kato-est}, we do have also the boundedness of
$G\colon X_\infty\times X_r\to X_\infty$. We combine this with the boundedness of the heat kernel operator
$S\colon \dot B^{-1+n/r}_{r,\infty}(\R^n)\to X_\infty$ (this is nothing but a restating of the well the continuous embeddding
$\dot B^{-1+n/r}_{r,\infty}(\R^n)\subset \dot B^{-1}_{\infty,\infty}(\R^n)$)
and the boundedness $\Phi\colon Y_r\to X_\infty$ (already established).
All together, this implies 
that (all the constants are independent on $k$):
\[
\begin{split}
\|u_{k+1}\|_{X_\infty}
&\le \|u_0\|_{X_\infty}+\|\Phi(f)\|_{X_\infty}
+\|G(u_k,u_k)\|_{X_\infty}\\
&\lesssim
\|a\|_{B^{-1+n/r}_{r,\infty}}+\|f\|_{Y_r}
+\|u_k\|_{X_r}\|u_k\|_{X_\infty}\\
&\lesssim
\|a\|_{B^{-1+n/r}_{r,\infty}}+\|f\|_{Y_r}+2\eta\|u_k\|_{X_\infty}.
\end{split}
\]
We can iterate this estimate. Replacing, if necessary, $\eta>0$ by a smaller constant (only depending on $n$ and $r$),
we get the second of~\eqref{eq:boun}.

Let us prove our second assertion.
Applying Item~ii) of our three previous Lemmas, we can estimate, for some $C_1>0$,
\[
    \begin{split}
    \|u_{k+1}\|_{L^2(\R^n\times \R^+)}
        &\le 
        \|u_0\|_{L^2(\R^n\times \R^+)}+\|\Phi(f)\|_{L^2(\R^n\times \R^+)}
        +\|G(u_k,u_k)\|_{L^2(\R^n\times \R^+)}\\
    &\leq
        {\textstyle\frac{1}{\sqrt{2}}}\|a\|_{\dot H^{-1}}+\kappa_n\|f\|_{L^2(\R^+;\dot H^{-1})}
        +C_1\|u_k\|_{X_r}\|u_k\|_{L^2(\R^n\times \R^+)}\\
    &\leq
        {\textstyle\frac{1}{\sqrt{2}}}\|a\|_{\dot H^{-1}}+\kappa_n \|f\|_{L^2(\R^+;\dot H^{-1})}
        +2\eta C_1\|u_k\|_{L^2(\R^n\times \R^+)}.
    \end{split}
\]
Once more, replacing, if necessary, $\eta>0$ by a smaller constant 
(only dependent on $n$ and $r$) noting that $1/\sqrt{2}=0.707\dots$, then iterating this estimate we deduce that
\begin{equation}
    \|u_k\|_{L^2(\R^n\times \R^+)}
    \leq  {\textstyle\frac{4}{5}} \|a\|_{\dot H^{-1}}
    +C_0 \|f\|_{L^2(\R^+;\dot H^{-1})}
\end{equation}
for all $k\in\N$. 
Passing to the limit as $k\to+\infty$ we deduce from this estimate~\eqref{eq:bound-u}.
Of course, when one assumes $f\in L^{(4+2n)/(4+n)}(\R^n\times \R^+)$, 
instead of $f\in L^2\bigl(\R^+;\dot H^{-1}(\R^n)\bigr)$, then Remark~\ref{rem:vari} should be used.
 
The proof of our third assertion relies on the same ideas. Let $2\le p\le \infty$.
We first make use of the continuous embedding
$\dot H^{-1}(\R^n)\subset \dot B^{-1-n(\frac12-\frac1p)}_{p,\infty}(\R^n)$
and of Item~{iii)} of Lemma~\ref{lem:heat}, to obtain that $\|e^{t\Delta}a\|_{\overline X_p}\lesssim\|a\|_{\dot H^{-1}}$.
Next, by Item~{iv)} of Lemma~\ref{lem:L2f} (with $q_1=p$ and $q_2=1$), we have $\|\Phi(f)\|_{\overline X_p}\lesssim \|f\|_{\overline Y_1}+\|f\|_{\overline Y_p}$. 
And by Item~{iii)} of Lemma~\ref{lem:mix},
$G\colon (\overline X_2\cap \overline X_p)\times X_r\to \overline X_p$.
Using the same approximation argument as before we first deduce that $u\in \overline X_2$,
next that $u\in \overline X_p$ and that estimate~\ref{ezp} holds.
\end{proof}

\section{Revisiting Fujigaki and Miyakawa asymptotic profiles}
\label{sec:revisit}

In this section we establish a self-contained result, on the asymptotic behavior of an integral term. Theorem~\ref{th:asy} below will be one of the ingredients for the proof of Theorem~\ref{th:theo1}, but it has an independent interest.

Let $M$ be a measurable function on $\R^n\times \R^+$ 
which satisfies the following scaling properties 
\begin{subequations}
\label{ass:M}
\begin{equation}
\label{scalingM}
    M(x,t)=t^{-\frac{n+1}{2}}M(x/\sqrt t,1), \qquad x\in\R^n,\;t>0.
\end{equation}
We also assume that
\begin{equation}
M(\cdot,1) \in L^1(\R^n)\cap L^\infty(\R^n).
\end{equation}
\end{subequations}

Next theorem is inspired by a result by Carpio \cite{Carpio} and  Fujigaki-Miyakawa~\cite{Fujigaki Miyakawa SIAM}. Later on, we will apply it with $\w= u\otimes u-f$ and $M=F$.
Its formulation is closer to~\cite[Theorem~2.1]{BRA-Lon}, but it differs in that the scaling relation of $M$ in \cite{BRA-Lon} are not the same and the decay rate, therein,  were slower. Moreover, we removed unessential regularity conditions on~$M$. 

\begin{theorem}
\label{th:asy}
\label{item:part1}
Let $n\ge1$,  $\w\in L^1(\R^n\times\R^+)$, with\,  $\|\w(t)\|_1=\mathcal{O}(\frac1t)$ as $t\to+\infty$.
Let us introduce the constant 
$ \lambda=\int_0^\infty\!\!\int \w(y,s)\dd y\dd s$
and let also
\begin{equation*}
 \label{integralphi}
 \Phi(x,t)=\int_0^t\!\!\int M(x-y,t-s)\w(y,s)\dd y\dd s.
\end{equation*}
Then, as $t\to+\infty$,
\begin{equation}
    \label{pro:Phi}
        \biggl\| \Phi(t) - \lambda  M(\cdot,t)  \,\biggr\|_q
        = o\bigl(t^{-\frac{1}{2}-\frac n2(1-\frac1q)}\bigr), 
        \qquad \text{with}\quad
            1\le  q<\frac{n}{n-1}.
      \end{equation}
The above results extend to
$\frac{n}{n-1}\le q\le \infty$, provided $\w$ satisfies also 
$\|\w(t)\|_\beta =\mathcal{O}(t^{-1-\frac n2(1-\frac1\beta)})$ as $t\to\infty$, 
for some $\beta$ such that 
$\frac1q\le \frac{1}{\beta}<\frac1q+\frac1n$. 
(The conclusion in the case $q=\infty$ requires the additional assumption that $M(\cdot,1)\in C_0(\R^n)$). 
\end{theorem}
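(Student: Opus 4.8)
The plan is to compare $\Phi(\cdot,t)$ with the self-similar profile $\lambda M(\cdot,t)$ by cutting the time integration at $s=t/2$ and splitting $\lambda$ accordingly. Setting $\lambda_2(t)=\int_{t/2}^\infty\!\!\int\w$, a direct computation gives the identity
\[
\Phi(\cdot,t)-\lambda M(\cdot,t)=R_1(\cdot,t)+I_2(\cdot,t)-\lambda_2(t)M(\cdot,t),
\]
with $R_1(x,t)=\int_0^{t/2}\!\!\int\bigl[M(x-y,t-s)-M(x,t)\bigr]\w(y,s)\dd y\dd s$ and $I_2(x,t)=\int_{t/2}^t\!\!\int M(x-y,t-s)\w(y,s)\dd y\dd s$. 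The scaling~\eqref{scalingM} yields $\|M(\cdot,t)\|_q=\|M(\cdot,1)\|_q\,t^{-\alpha}$ with $\alpha=\frac12+\frac n2(1-\frac1q)$, which is exactly the target rate; the whole issue is thus to gain a genuine $o(1)$ over this common rate in each of the three pieces.

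Two of the pieces are comparatively soft. For the profile term, $\w\in L^1(\R^n\times\R^+)$ forces $\lambda_2(t)\to0$, hence $\|\lambda_2(t)M(\cdot,t)\|_q=o(t^{-\alpha})$. For $R_1$ I would argue by density: given $\epsilon>0$, pick $\w_0$ supported in a fixed set $\{|y|\le\rho\}\times(0,T)$ with $\|\w-\w_0\|_{L^1}<\epsilon$. On $s\in(0,t/2)$ both $\|M(\cdot,t-s)\|_q$ and $\|M(\cdot,t)\|_q$ are $\le Ct^{-\alpha}$, so the tail $\w-\w_0$ contributes at most $C\epsilon\,t^{-\alpha}$; for the compactly supported part, the scaling identity combined with the strong continuity of translations and of dilations in $L^q$ gives $\|M(\cdot-y,t-s)-M(\cdot,t)\|_q=o(t^{-\alpha})$ uniformly for $(y,s)$ in the compact support, so that $t^\alpha\|R_1(\cdot,t)\|_q\to0$ after letting $\epsilon\to0$.

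The main obstacle is the near-diagonal term $I_2$, where $M(\cdot,t-s)$ concentrates as $s\to t$. Young's inequality gives $\|I_2(\cdot,t)\|_q\le\|M(\cdot,1)\|_q\int_{t/2}^t(t-s)^{-\alpha}\|\w(s)\|_1\dd s$, and $\alpha<1$ holds precisely when $q<n/(n-1)$, so the kernel is integrable at the diagonal; however the crude bound $\|\w(s)\|_1=\mathcal O(1/t)$ produces only $\mathcal O(t^{-\alpha})$, not $o(t^{-\alpha})$. To upgrade it I split the window at $s=t-\delta t$, $\delta\in(0,\tfrac12)$. On $(t-\delta t,t)$ the bound $\|\w(s)\|_1\le C/s\le 2C/t$ and $\int_{t-\delta t}^t(t-s)^{-\alpha}\dd s=(1-\alpha)^{-1}(\delta t)^{1-\alpha}$ give a contribution $\le C'\delta^{1-\alpha}t^{-\alpha}$, small uniformly in $t$ for small $\delta$ (this is exactly where $\alpha<1$ is used); on $(t/2,t-\delta t)$ the kernel is $\le(\delta t)^{-\alpha}$, so the contribution is $\le\delta^{-\alpha}t^{-\alpha}\int_{t/2}^\infty\|\w(s)\|_1\dd s$, which for fixed $\delta$ is $o(t^{-\alpha})$ since $\w\in L^1$. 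Choosing first $\delta$ small and then $t$ large yields $t^\alpha\|I_2(\cdot,t)\|_q\to0$. This interplay between the pointwise rate $\mathcal O(1/t)$ and the global $L^1$ integrability, in this precise order of quantifiers, is the heart of the proof.

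For the remaining range $n/(n-1)\le q\le\infty$ the exponent $\alpha\ge1$ renders the diagonal kernel non-integrable, so in $I_2$ I would replace the $L^q\!*\!L^1$ estimate by the $L^r\!*\!L^\beta$ one with $\frac1r=1+\frac1q-\frac1\beta$. The hypotheses $\frac1q\le\frac1\beta<\frac1q+\frac1n$ translate into $1\le r<n/(n-1)$, restoring integrability of $\|M(\cdot,t-s)\|_r$ at the diagonal, while the extra decay $\|\w(s)\|_\beta=\mathcal O(t^{-1-\frac n2(1-\frac1\beta)})$ plays the role of $\|\w(s)\|_1=\mathcal O(1/t)$ in the two-scale split, the exponents combining to reproduce the same rate $t^{-\alpha}$; the profile and off-diagonal terms are treated as before. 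The only additional point concerns $q=\infty$, where continuity of translations and dilations in the sup-norm is not automatic and is supplied precisely by the assumption $M(\cdot,1)\in C_0(\R^n)$.
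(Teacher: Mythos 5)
Your proposal is correct and follows essentially the same strategy as the paper's proof: split the time integral into a far-from-diagonal region (handled via the scaling of $M$ together with continuity of translations and dilations in $L^q$, with $M(\cdot,1)\in C_0$ supplying the case $q=\infty$), a near-diagonal window treated by Young's inequality where the pointwise rate $\|\w(t)\|_1=\mathcal{O}(1/t)$ (resp.\ $\|\w(t)\|_\beta$) is played against the shrinking window, and a profile correction controlled by the $L^1$ tail of $\w$. The only differences are cosmetic bookkeeping: you cut at $t/2$ and then re-split the diagonal window at $t-\delta t$, whereas the paper cuts once at $a_\eta t$ with $a_\eta\to1$, and you use a density (compact-support approximation) argument where the paper invokes dominated convergence.
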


\begin{proof}
%

Let $\eta>0$ arbitrary and $a_\eta>0$ to be chosen later, with $\frac12<a_\eta<1$.
We decompose
 \[
 \begin{split}
  \Phi(x,t)-&\lambda M(x,t)\\
  \qquad
  &=\int_0^t\!\!\int M(x-y,t-s)\w(y,s)\dd y\dd s-  M(x,t) \int_0^\infty\!\!\!\int \w(y,s)\dd y \dd s\\
  &=(I_1+I_2+I_3+I_4)(x,t),
 \end{split}
 \]
where
\[
\begin{split}
 I_1(x,t)&=-M(x,t)\int_{a_\eta t}^\infty\!\int \w(y,s)\dd y \dd s,\\
 I_2(x,t)&=\int_0^{a_\eta t}\!\!\int [M(x,t-s)-M(x,t)]\w(y,s)\dd y\dd s,\\
 I_3(x,t)&=\int_0^{a_\eta t}\!\!\int[M(x-y,t-s)-M(x,t-s)]\w(y,s)\dd y \dd s\qquad\text{and}\\
 I_4(x,t)&=\int_{a_\eta t}^t\!\!M(x-y,t-s)\w(y,s)\dd y\dd s.
 \end{split}
\]
Now,
\[
    \|I_1(t)\|_q
    \le 
    C_q\,t^{-\frac{1}{2}-\frac{n}{2}(1-\frac{1}{q})}\int_{a_\eta t}^\infty\!\int |\w(y,s)|\dd y \dd s
    =o\bigl(t^{-\frac{1}{2}-\frac{n}{2}(1-\frac{1}{q})}\bigr),
 \qquad \text{as $t\to+\infty$}.
\]
by the assumption $\w\in L^1(\R^n\times\R^+)$ and the dominated convergence theorem.

For the estimate of $I_2$, we start observing that
\[
\|I_2(t)\|_q
\le \int_0^{a_\eta t}\!\!\!\int  \|M(\cdot,t-s)-M(\cdot,t)\|_q|\w(y,s)|\dd y \dd s.
\]
By the scaling properties of~$M$,
\[
\begin{split}
\|I_2(t)\|_q
&\le 
t^{\frac{n}{2q}}\int_0^{a_\eta t}\!\!\!\int 
\textstyle
\bigl\|\bigl(t-s\bigr)^{-\frac{n+1}{2}} M\bigl(\frac{\sqrt{t}\,\cdot}{\sqrt{t-s\,}},1\bigr)-t^{-\frac{n+1}{2}}M(\cdot,1)\bigr\|_q \,|\w(y,s)|\dd y\dd s\\
&\le
t^{-\frac{n+1}{2}+\frac{n}{2q}}\int_0^\infty\!\!\!\int 
\textstyle
\bigl\|
\bigl(1-s/t\bigr)^{-\frac{n+1}{2}}
M\bigl(\frac{\cdot}{\sqrt{1-s/t\,}},1\bigr)-M(\cdot,1)\bigr\|_q \, |\w(y,s)|
\, {\bf 1}_{[0,a_\eta t]}(s)\dd y\dd s,
\end{split}
\]
where ${\bf 1}_S$ denotes the indicator function of the set~$S$.
For a.e. $s\ge0$ we have
\[
\bigl\|
\bigl(1-s/t\bigr)^{-\frac{n+1}{2}}\textstyle
M\bigl(\frac{\cdot}{\sqrt{1-s/t\,}},1\bigr)-M(\cdot,1)\bigr\|_q \to0,
\qquad\text{as $t\to+\infty$}.
\]
We used here that, if $\beta_n\to1$, then $g(\beta_n\cdot) \to g$ in $L^q(\R^n)$, when $g\in L^q(\R^n)$, which is proved by an elementary density argument for $1\le q<\infty$, but it is true also for $q=\infty$, provided $g\in C_0(\R^n)$.  

The integrand in the right-hand side above is dominated by
\[
\bigl((1-a_\eta)^{-\frac{n+1}{2}+\frac{n}{2q}}+1\bigr)\|M(\cdot,1)\|_q\,|\w(y,s)| \in L^1(\R^n\times\R^+).\]
By the dominated convergence theorem, we finally deduce
\[
    \|I_2(t)\|_q=o(t^{-\frac{1}{2}-\frac n2(1-\frac1q)})
 \qquad \text{as $t\to+\infty$}.
\]

In order to estimate the third integral  we make use once more of the scaling properties~\eqref{scalingM} of~$M$. 
For all $1\le q\le \infty$ we have,
for some constant $C'_\eta>0$,
\[
    \begin{split}
    \|I_3(t)\|_q
    &\le 
    \int_0^{a_\eta t}\!\!\int (t-s)^{-\frac{n+1}{2}}
      \bigl\| M(\textstyle\frac{\cdot\,-y}{\sqrt{t-s}},1)-M(\textstyle\frac{\cdot}{\sqrt{t-s}},1) \bigr\|_q \,|\w(y,s)|\dd y\dd s\\
  &\le  C'_\eta\, 
    t^{-\frac{1}{2}-\frac n2(1-\frac1q)}   \int_0^\infty\!\!\int 
      {\bf 1}_{[0,a_\eta t]}(s)\bigl\| M(\cdot-\textstyle\frac{y}{\sqrt{t-s}},1)-M(\cdot,1)\bigr\|_q \,|\w(y,s)|\dd y\dd s.\\
  \end{split}
  \]
The integrand is dominated by the integrable function $2\|M(\cdot,1)\|_q\,|\w(y,s)|$. Moreover, for a.e. $(y,s)\in \R^n\times(0,a_\eta t)$,
by the continuity under translations of the $L^q$-norm
(or, when $q=\infty$, by the fact that $M(\cdot,t)$ is uniformly continuous), we have
\[
   {\bf 1}_{[0,a_\eta t]}(s)\bigl\| M(\cdot-\textstyle\frac{y}{\sqrt{t-s}},1)-M(\cdot,1)\bigr\|_q \,\w(y,s)\to0, \qquad\text{as $t\to+\infty$}.
\]
The dominated convergence theorem then yields 
\[
    \|I_3\|_q=o\bigl(t^{-\frac{1}{2}-\frac n2(1-\frac{1}{q})}\bigr),
    \qquad \text{as $t\to+\infty$}.
\]

Let us now consider $I_4$. 
Applying the Young inequality
we have, for $1\le  \alpha,\beta\le q\le\infty$,
\begin{equation}
\label{eq:youn}
    \|I_4(t)\|_q
    \le \int_{a_\eta t}^t \|M(t-s)\|_\alpha 
    \|\w(\cdot,s)\|_\beta\dd s, \qquad \textstyle1+\frac1q
    =\frac1\alpha+\frac1\beta.
\end{equation}

Let us first consider the case $1\le q<\frac{n}{n-1}$.
This ensures $\frac{n}{2}(1-\frac{1}{q})<\frac{1}{2}$.
For any $\eta>0$ small enough, take $a_\eta=1-\eta^{1/(1/2-(n/2)(1-1/q))}$, in a such way that $1/2<a_\eta<1$.
In this case we apply the above estimate with $\alpha=q$ and $\beta=1$.
We now make use of the assumption $\|\w(t)\|_1=\mathcal{O}(\frac1t)$ to deduce the estimate, 
for large enough~$t$, noting that $t/2<a_\eta t$,
\[
\begin{split}
    \|I_4\|_q  
    &\le C_{q} \int_{a_\eta t}^t (t-s)^{-\frac{1}{2}-\frac n2(1-\frac1q)}s^{-1} \dd s\\
    &\le C_q\, t^{-1} \int_{a_\eta t}^t (t-s)^{-\frac{1}{2}-\frac n2(1-\frac1q)}\dd s\\
    &=C_q\,\eta\, t^{-\frac{1}{2}-\frac n2(1-\frac1q)}.
\end{split}
 \]
As $\eta>0$ is arbitrarily small, the conclusion follows in this case.

When $\frac{n}{n-1}\le q\le\infty$, we need to take in estimate~\eqref{eq:youn}
$1-\frac1n<\frac{1}{\alpha}\le1$, and so $\frac1q\le\frac{1}{\beta}<\frac1q+\frac1n$.
We consider again an arbitrary small $\eta>0$, but we take now
$a_\eta=1-\eta^{1/(1/2-(n/2)(1-1/\alpha))}$.
The additional assumption 
$\|\w(t)\|_\beta=\mathcal{O}(t^{-(1+\frac n2(1-\frac1\beta))})$ yields the estimate, for large enough~$t$,
\[
\begin{split}
    \|I_4\|_q  
    &\le C_{q} \int_{a_\eta t}^t (t-s)^{-\frac{1}{2}-\frac n2(1-\frac1\alpha)}
        s^{-1-\frac{n}{2}(1-\frac1\beta)} \dd s\\
    &\le C_q\, t^{-1-\frac{n}{2}(1-\frac1\beta)} 
    \int_{a_\eta t}^t (t-s)^{-\frac{1}{2}-\frac n2(1-\frac1\alpha)}\dd s\\
    &=C_q\,\eta\, t^{-\frac{1}{2}-\frac n2(1-\frac1q)}.
\end{split}
 \]
Hence, the conclusion follows also in this case.
\end{proof}

\section{Rapidly dissipative heat flows}
\label{sec:hea}

The goal of this section is to state a simple variant of the well known
characterization of the Besov space $\dot B^{-s}_{r,\infty}(\R^n)$,
with $s>0$, in terms of the heat kernel: such a characterization asserts that 
$f\in \dot B^{-s}_{r,\infty}(\R^n)$ if and only if $\|e^{t\Delta} f\|_r=
\mathcal{O}(t^{-s/2})$, the Besov norm $\|f\|_{\dot B^{-s}_{r,\infty}}$ being equivalent to $\sup_{t>0}t^{s/2}\|e^{t\Delta}f\|_r$. See \cite[Chap.~2]{BCD}.
In the same way, the subspace $\dot B^{-s}_{r,c_0} (\R^n)$ characterizes the
tempered distributions $f$ such that  $\|e^{t\Delta} f\|_r=
o(t^{-s/2})$ as $t\to+\infty$.

The lemma below will be used to establish the last claim
of Theorem~\ref{th:theo1}.

\begin{lemma}
\label{lem:hf}
Let $1\le r\le\infty$, $s>0$ and $f\in \dot B^{-s}_{r,\infty}(\R^n)$. 
Then $f$ belongs to $\dot B^{-s}_{r,c_0}(\R^n)$ if and only if 
$\lim_{t\to+\infty} t^{s/2}\|e^{t\Delta}f\|_r=0$.
\end{lemma}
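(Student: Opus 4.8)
The plan is to exploit the standard correspondence between the low-frequency behaviour of $f$ (the regime $j\to-\infty$ in the Littlewood--Paley decomposition) and the large-time behaviour of the heat flow (the regime $t\to+\infty$), the two being linked through the relation $t\sim 2^{-2j}$. Two elementary facts about the interaction of $e^{t\Delta}$ with the dyadic blocks $\Delta_j$ do all the work. First, since $\widehat{\Delta_j f}$ is supported in an annulus $\{|\xi|\sim 2^j\}$, the heat semigroup contracts it exponentially: $\|e^{t\Delta}\Delta_j f\|_r\le C\,e^{-c\,2^{2j}t}\|\Delta_j f\|_r$ for constants $c,C>0$ independent of $j,t$ (see \cite[Chap.~2]{BCD}). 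Second, on that annulus the multiplier $e^{t|\xi|^2}$ is harmless, so $\Delta_j f$ can be recovered from $e^{t\Delta}f$: writing $\Delta_j=\varphi(2^{-j}D)$ with $\varphi$ a fixed bump supported away from $0$ and $\infty$, one has $\Delta_j f = m_{t,j}(D)\,e^{t\Delta}f$ with $m_{t,j}(\xi)=\varphi(2^{-j}\xi)\,e^{t|\xi|^2}$, and a change of variables shows $\|\mathcal F^{-1}m_{t,j}\|_1=\|\mathcal F^{-1}[\varphi\,e^{\tau|\cdot|^2}]\|_1$ with $\tau=2^{2j}t$, a quantity equal to a fixed constant $C_\varphi$ when $\tau=1$.

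For the \emph{if} part I would assume $t^{s/2}\|e^{t\Delta}f\|_r\to0$ as $t\to+\infty$ and apply the second fact at the time $t_j=2^{-2j}$, so that $\tau=1$. Young's convolution inequality then gives $\|\Delta_j f\|_r\le C_\varphi\,\|e^{t_j\Delta}f\|_r$, whence, since $2^{-js}=t_j^{\,s/2}$,
\[
2^{-js}\|\Delta_j f\|_r\le C_\varphi\,t_j^{\,s/2}\|e^{t_j\Delta}f\|_r .
\]
As $j\to-\infty$ we have $t_j=2^{-2j}\to+\infty$, so the right-hand side tends to $0$ by hypothesis, which is precisely the condition $f\in\dot B^{-s}_{r,c_0}(\R^n)$.

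For the \emph{only if} part I would assume $f\in\dot B^{-s}_{r,c_0}(\R^n)$ and set $c_j:=2^{-js}\|\Delta_j f\|_r$, so that $\sup_j c_j=\|f\|_{\dot B^{-s}_{r,\infty}}<\infty$ and $c_j\to0$ as $j\to-\infty$. Summing the first fact over the decomposition $e^{t\Delta}f=\sum_j e^{t\Delta}\Delta_j f$ and using $t^{s/2}2^{js}=(2^{2j}t)^{s/2}$ yields
\[
t^{s/2}\|e^{t\Delta}f\|_r\le C\sum_{j\in\Z}\phi\bigl(2^{2j}t\bigr)\,c_j,
\qquad \phi(u):=e^{-cu}u^{s/2}.
\]
Given $\ep>0$, I would pick $J$ with $c_j<\ep$ for $j\le J$; the low-frequency part is then bounded by $\ep\sum_{j}\phi(2^{2j}t)\le \ep K$, with $K:=\sup_{t>0}\sum_{j\in\Z}\phi(2^{2j}t)$, while the high-frequency part $\sum_{j>J}\phi(2^{2j}t)c_j\le \|f\|_{\dot B^{-s}_{r,\infty}}\sum_{j>J}\phi(2^{2j}t)$ tends to $0$ as $t\to+\infty$, since every term does and the series is dominated uniformly in $t$. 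Hence $\limsup_{t\to+\infty}t^{s/2}\|e^{t\Delta}f\|_r\le C\,\ep\,K$, and letting $\ep\to0$ gives the claim.

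The only genuinely quantitative point is the uniform bound $K<\infty$ together with the vanishing of the high-frequency tail, and this is the step I expect to be the (still routine) obstacle rather than either multiplier estimate. Both rest on the two-sided decay of $\phi$: near $u=0$ one has $\phi(u)\sim u^{s/2}\to0$, which is exactly where $s>0$ is used, since it makes the geometric sum $\sum_{4^{j}t<1}(4^{j}t)^{s/2}$ converge to an $O(1)$ quantity uniformly in $t$; at $u=+\infty$ the factor $e^{-cu}$ controls the high-frequency sum and permits the dominated convergence theorem for series. Finally, the case $r=\infty$ needs no change, as every estimate above is an $L^1\!\to\!L^r$ convolution bound valid for all $r\in[1,\infty]$.
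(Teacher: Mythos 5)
Your proof is correct, and half of it takes a genuinely different route from the paper's. For the implication ``$f\in\dot B^{-s}_{r,c_0}(\R^n)\Rightarrow t^{s/2}\|e^{t\Delta}f\|_r\to0$'' the two arguments are essentially the same: Littlewood--Paley decomposition, the block estimate $\|e^{t\Delta}\Delta_jf\|_r\lesssim e^{-c2^{2j}t}\|\Delta_jf\|_r$, and a low/high-frequency splitting; the only difference is bookkeeping, since you split at a fixed $J$ and treat the tail $\sum_{j>J}\phi(2^{2j}t)$ by a uniform-in-$t$ bound plus dominated convergence, while the paper splits at the $t$-dependent frequency $2^{j}\sim(\epsilon t)^{-1/2}$ and estimates both pieces by explicit geometric sums (your constant $K<\infty$, which uses $s>0$ exactly where you say, plays the role of those sums). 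For the converse implication the routes diverge: the paper quotes from \cite[p.~74]{BCD} the integral representation $\|\Delta_jf\|_r\le C\int_0^\infty t^{s/2}2^{2j(s/2+1)}e^{-ct2^{2j}}\|e^{t\Delta}f\|_r\dd t$ and splits the time integral at a large $t_\epsilon$, whereas you invert the heat flow on each annulus at the single matched time $t_j=2^{-2j}$, using that the multiplier $m_{t,j}(\xi)=\varphi(2^{-j}\xi)e^{t|\xi|^2}$ has a scale-invariant $L^1$ norm of its inverse Fourier transform, to obtain the pointwise comparison $2^{-js}\|\Delta_jf\|_r\le C_\varphi\,t_j^{s/2}\|e^{t_j\Delta}f\|_r$. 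Your version of this half is more self-contained and arguably cleaner: it needs no integral identity and no splitting, and it makes transparent that the $j$-th Besov quantity is controlled by the heat flow evaluated at the single time $t=2^{-2j}$, which tends to $+\infty$ precisely as $j\to-\infty$. What the paper's formulation buys is economy of presentation: it tracks, with minimal modifications, the proof of \cite[Theorem~2.34]{BCD}, of which the lemma is explicitly presented as a variant.
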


\begin{proof}
The assertion can be proved following the steps of \cite[Theorem~2.34]{BCD}, so we highlight only the relevant modifications.
In the following calculations, $c,C$ are some positive constants, possibly
dependent on the fixed parameters $s$ and $r$ (but not on $\epsilon$ and $j$).
We use the Littlewood-Paley decomposition of $f$ and use the known
estimate (see \cite[p.73]{BCD}):
\[
\begin{split}
t^{s/2} \|e^{t\Delta}f\|_r 
&\le \sum_{j\in\Z} \|t^{s/2}\Delta_j e^{t\Delta} f\|_r 
\le C\|f\|_{\dot B^{-s}_{r,\infty}} 
\sum_{j\in\Z} t^{s/2} 2^{js} e^{-ct2^{2j}} c_{j},
\end{split}
\]
where $|c_j|\le1 $. When $f\in \dot B^{-s}_{r,c_0}$, then we have $c_j\to0$ as $j\to-\infty$.
So we just have to prove that
\[
\lim_{t\to+\infty}\sum_{j\in\Z} \Bigl(t^{s/2} 2^{js} e^{-ct2^{2j}} c_{j}\Bigr)=0.
\]
Indeed, let $\epsilon>0$ and $j_\epsilon\in \Z$, such that if
$j\le j_\epsilon$ then $|c_j|<\epsilon^{1+s/2}$.
There exists $t_\epsilon>0$ such that, if $t\ge t_\epsilon$,
then $2^{j_\epsilon}>(\epsilon t)^{-1/2}$.
Therefore, for any $t\ge  t_\epsilon$, on one hand,
\[
\sum_{2^{j}\le (\epsilon t)^{-1/2}} 
\Bigl(t^{s/2} 2^{js} e^{-ct2^{2j}} |c_{j}|\Bigr)
\le 
\sum_{2^{j}\le (\epsilon t)^{-1/2}} t^{s/2}2^{js} \epsilon^{1+s/2}
\le C\epsilon.
\]
On the other hand,
\[
\sum_{2^{j}\ge (\epsilon t)^{-1/2}} 
\Bigl(t^{s/2} 2^{js} e^{-ct2^{2j}} |c_{j}|\Bigr)
\le 
\sum_{2^{j}\ge (\epsilon t)^{-1/2}} (t2^{2j})^{s/2}e^{-ct2^{2j}}
\le 
\sum_{2^{j}\ge (\epsilon t)^{-1/2}} \frac{C}{t2^{2j}}
\le 2C\epsilon.
\]
This in turn implies our assertion, and so $t^{s/2}\|e^{t\Delta}f\|_r\to0$
as $t\to+\infty$ when $f\in \dot B^{-s}_{r,c_0}(\R^n)$.

The converse part relies on the inequality (see \cite[p.74]{BCD}).
\[
\|\Delta_j f\|_r \le C\int_0^\infty t^{s/2}2^{2j(s/2+1)} e^{-c t 2^{2j}}
\|e^{t\Delta} f\|_r\dd t.
\]
Let $\epsilon>0$. Then for any $t\ge t_\epsilon>0$ large enough, 
we have $t^{s/2}\|e^{t\Delta}f\|_r<\epsilon$. Hence, 
for some constant $C_f=\sup_{t>0} t^{s/2}\|e^{t\Delta}f\|_r$,
\[
\begin{split}
\| \Delta_j f\|_r 
&\le C_f \int_0^{t_\epsilon} 2^{2j(s/2+1)} e^{-c t 2^{2j}}\dd t
+C\epsilon \int_{0}^\infty 2^{2j(s/2+1)} e^{-c t 2^{2j}} \dd t
\\
&\le C_f 2^{js} \int_0^{t_\epsilon 2^{2j}}  e^{-c t }\dd t
+C\epsilon 2^{js}.
\end{split}
\]
From this we deduce that $2^{-js}\|\Delta_j f\|_r\to0$ as $j\to-\infty$.
\end{proof}

\section{Constructing $f$ and a solution $v$ with a prescribed
energy matrix}
\label{sec:algo}

Next step of the proof of Theorem~\ref{th:theo1} is the algorithmic construction of a tensor~$f$ and of 
a solution $v$ of the Navier--Stokes equations with external force of the divergence form $\nabla\cdot f$, such that the energy matrix
\[
\int_0^\infty\!\!\!\int (v\otimes v-f)(x,t)\dd x\dd t
\]
is a scalar multiple of the identity matrix.
This is done in the following proposition.

\begin{proposition}
\label{pro:algo}
Let $2\le n<r<\infty$.
There exists a constant $\eta_0>0$ (only dependent on~$n$ and~$r$) with the following property.
Let
$
a\in 
\dot B^{-1+n/r}_{r,\infty}(\R^n)\cap \dot H^{-1}(\R^n)$, $\nabla\cdot a=0.
$
Let $\chi\in Y_r$ and $\chi\in L^1(\R^n\times\R^+)$ such that 
$\int_0^\infty\!\!\int\chi=1$.
We assume that
\begin{subequations}
\begin{equation}
\label{small:alla}
\|a\|_{\dot B^{-1+n/r}_{r,\infty}}+ \|a\|_{\dot H^{-1}}^2\|\chi\|_{Y_r}<\eta_0
\end{equation}
and that at least one of the following holds:
\begin{equation}
\label{small:allb}
 \|a\|_{\dot H^{-1}}
 \|\chi\|_{L^2(\R^+;\dot H^{-1})} <\eta_0
\qquad
\text{or}
\qquad
 \|a\|_{\dot H^{-1}} \|\chi\|_{L^{\frac{4+2n}{4+n}}(\R^n\times\R^+)} <\eta_0.
\end{equation}
\end{subequations}

There exists a real matrix $(\sigma_{k\ell})$ (possibly dependent on $a$ and $\chi$), with
\[
|\sigma_{k\ell}|\le 1
\qquad(k,\ell=1,\ldots,n)
\]
such that if 
\[
f=(f_{k\ell}),\quad 
\text{with}\quad
f_{k\ell}(x,t)=\sigma_{k\ell}\,\|a\|_{\dot H^{-1}}^2\chi(x,t), 
\]
and $a$ and $\chi$ satisfy the previous conditions,
then the solution $v$ of (NS) given by Proposition~\ref{prop:exi}
satisfies, for some $c\in\R$,
\begin{equation}
\label{eq:ort}
\int_0^\infty\!\!\!\int (v_k v_\ell)(y,s)\dd y\dd s-\int_0^\infty\!\!\!\int f_{k\ell} (y,s)\dd y \dd s
=c\,\delta_{k\ell}
\qquad(k,\ell=1,\ldots,n).
\end{equation}
\end{proposition}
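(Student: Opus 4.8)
The plan is to recast the requirement \eqref{eq:ort} as a fixed-point problem for the matrix $\sigma=(\sigma_{k\ell})$ and to solve it with Brouwer's theorem. First I would record that, since $\int_0^\infty\!\!\int\chi=1$, the force contributes $\int_0^\infty\!\!\int f_{k\ell}=\sigma_{k\ell}\|a\|_{\dot H^{-1}}^2$, so that the energy matrix in \eqref{eq:ort} is $E_{k\ell}(\sigma)=T_{k\ell}(\sigma)-\|a\|_{\dot H^{-1}}^2\sigma_{k\ell}$, where I abbreviate $T_{k\ell}(\sigma)=\int_0^\infty\!\!\int v_k v_\ell$ and $v=v[\sigma]$ is the solution furnished by Proposition~\ref{prop:exi} for the force $f=\|a\|_{\dot H^{-1}}^2\,\sigma\,\chi$. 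The matrix $T(\sigma)$ is symmetric, so I restrict the search to the space $\mathcal{S}_0$ of trace-free symmetric matrices. Writing $P_0$ for the orthogonal projection onto $\mathcal{S}_0$, so that $P_0 M=M-\frac1n(\mathrm{tr}\,M)I$, the condition $E(\sigma)\in\R I$ is equivalent to $P_0 E(\sigma)=0$; and for $\sigma\in\mathcal{S}_0$ this reads as the fixed-point equation
\[
\sigma=\frac{1}{\|a\|_{\dot H^{-1}}^2}\,P_0 T(\sigma)=:\mathcal{G}(\sigma).
\]
The term $cI$ has disappeared because $P_0(cI)=0$: this is the very point of projecting, and the one remaining degree of freedom (the trace of $\sigma$, here fixed to zero) is what produces the scalar $c$, which in the end equals $c=\frac1n\,\mathrm{tr}\,T(\sigma)=\frac1n\|v\|_{L^2}^2\ge0$.

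Next I would verify that $\mathcal{G}$ maps the cube $B=\{\sigma\in\mathcal{S}_0:\,|\sigma_{k\ell}|\le1\}$ into itself. Through $|\sigma_{k\ell}|\le1$, the smallness hypotheses \eqref{small:alla}--\eqref{small:allb} guarantee that $f=\|a\|_{\dot H^{-1}}^2\sigma\chi$ obeys condition~(A) and the $L^2(\dot H^{-1})$- (or $L^{(4+2n)/(4+n)}$-) smallness required by Proposition~\ref{prop:exi}(ii), since $\|f\|$ is bounded by $\|a\|_{\dot H^{-1}}^2\|\chi\|$ times a dimensional constant. That proposition then gives $\|v\|_{L^2(\R^n\times\R^+)}\le\frac45\|a\|_{\dot H^{-1}}+C_0\|f\|$, so that the smallness $\|a\|_{\dot H^{-1}}\|\chi\|<\eta_0$ forces $\|v\|_{L^2}\le(\frac45+C\eta_0)\|a\|_{\dot H^{-1}}<\|a\|_{\dot H^{-1}}$ as soon as $\eta_0$ is small, depending only on $n,r$. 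By Cauchy--Schwarz each entry satisfies $|T_{k\ell}(\sigma)|=|\int v_k v_\ell|\le\|v\|_{L^2}^2$, and $P_0$ does not push entries past $\|v\|_{L^2}^2$ (the off-diagonal entries are unchanged, while $|(P_0T)_{kk}|=|T_{kk}-\frac1n\|v\|_{L^2}^2|\le\frac{n-1}{n}\|v\|_{L^2}^2$); hence $|\mathcal{G}(\sigma)_{k\ell}|\le\|v\|_{L^2}^2/\|a\|_{\dot H^{-1}}^2<1$, and $\mathcal{G}(B)\subset B$, in fact into its interior.

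It remains to check that $\mathcal{G}$ is continuous on $B$ and to invoke Brouwer's theorem, $B$ being convex and compact in $\mathcal{S}_0\cong\R^{n(n+1)/2-1}$. Continuity reduces to that of $\sigma\mapsto v[\sigma]$ from $B$ into $L^2(\R^n\times\R^+)$: as $\sigma$ enters linearly through $f=\|a\|_{\dot H^{-1}}^2\sigma\chi$, and the solution map of Proposition~\ref{prop:exi} is Lipschitz in the forcing term on the relevant small ball, the composite $\sigma\mapsto T(\sigma)\mapsto P_0T(\sigma)$ is continuous. The Lipschitz dependence follows by subtracting the integral equations \eqref{NS-ab} for two forces, writing $v-v'=\Phi(f-f')+G(v-v',v)+G(v',v-v')$, and estimating in $L^2$ and $X_r$ via Lemmas~\ref{lem:L2f}--\ref{lem:mix} together with $\|f-f'\|\lesssim\|a\|_{\dot H^{-1}}^2\,|\sigma-\sigma'|\,\|\chi\|$. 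Brouwer then yields a fixed point $\sigma\in B$, which is symmetric, trace-free, satisfies $|\sigma_{k\ell}|\le1$, and by construction gives $P_0E(\sigma)=0$, that is \eqref{eq:ort}. The points I expect to require care are the bookkeeping that converts $|\sigma_{k\ell}|\le1$ into the smallness of $f$ with the correct dimensional constants, so that $\eta_0$ depends only on $n$ and $r$, and making the $L^2$-Lipschitz dependence $\sigma\mapsto v[\sigma]$ precise; a contraction-mapping version of $\mathcal{G}$ would deliver both at once but would force me to differentiate $T$ in $\sigma$, so I would favor the Brouwer route supplemented by a short continuity argument.
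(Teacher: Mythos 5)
Your proof is correct, but it takes a genuinely different route from the paper's. The paper constructs the pair $(f,v)$ by successive approximation: starting from $f^{(0)}=0$, the force at step $m$ is built from the energy matrix of the solution at step $m-1$ by exactly your recipe (off-diagonal entries $c^{(m-1)}_{k\ell}$, diagonal entries $c^{(m-1)}_{kk}-\bar c^{(m-1)}$, i.e.\ precisely your projection $P_0$ applied to $T$), and the smallness hypotheses \eqref{small:alla}--\eqref{small:allb} are used to show that consecutive iterates contract by a factor $\tfrac12$ in $L^2(\R^n\times\R^+)$, hence converge in $L^2\cap X_r\cap X_\infty$; relation \eqref{eq:ort} then follows by passing to the limit in the recursion. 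In other words, the paper runs the Picard iteration for essentially the map you call $\mathcal{G}$, whereas you only verify that $\mathcal{G}$ is a continuous self-map of the compact convex set $B\subset\mathcal{S}_0$ and invoke Brouwer. The technical content nearly coincides: your Lipschitz estimate for $\sigma\mapsto v[\sigma]$, obtained by subtracting the two integral equations and applying Lemmas~\ref{lem:L2f} and~\ref{lem:mix}, is the same estimate that drives the paper's contraction, and both arguments yield the same bound $|\sigma_{k\ell}|\le1$ and the same $c=\tfrac1n\|v\|_{L^2(\R^n\times\R^+)}^2$. What differs is what each framework buys: the paper's iteration is constructive (this is the ``algorithmic construction'' advertised in the introduction), gives a geometric convergence rate, and produces the approximating sequences $u^{(m)},f^{(m)}$ on which Remark~\ref{rem:tec} (needed later in the proof of Theorem~\ref{th:theo1}) is formulated; your Brouwer argument gives bare existence, which fully suffices for Proposition~\ref{pro:algo}, and has the merit of isolating the finite-dimensional character of the problem --- with your approach the conclusion of Remark~\ref{rem:tec} would instead be obtained by applying Proposition~\ref{prop:exi}(iii) directly to the fixed force $f$, which is a constant matrix multiple of $\chi$. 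Two small points to tidy: dispose separately of the degenerate case $a=0$ (your $\mathcal{G}$ divides by $\|a\|_{\dot H^{-1}}^2$; in that case $f=0$, $v=0$ and \eqref{eq:ort} is trivial), and note that Proposition~\ref{prop:exi}(ii) requires no smallness of $\|f\|_{L^2(\R^+;\dot H^{-1})}$ itself --- condition (A) plus membership give \eqref{eq:bound-u}, and the smallness in \eqref{small:allb} is what upgrades \eqref{eq:bound-u} to $\|v\|_{L^2(\R^n\times\R^+)}<\|a\|_{\dot H^{-1}}$, as you correctly exploit.
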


\begin{proof}
Let us construct, by induction, a sequence of matrix-valued functions 
$(f^{(m)})_{m\in\N}$ and a sequence of solutions $(u^{(m)})_{m\in\N}$ of the Navier-Stokes equations with external forces
of divergence form $\nabla\cdot f^{(m)}$,
\begin{equation}
\label{INSm}
\tag{NS$_m$}
u^{(m)}(t)=e^{t\Delta}a+\int_0^t e^{(t-s)\Delta}\PP\nabla\cdot f^{(m)}(s)\dd s
+G(u^{(m)},u^{(m)})(t).
\end{equation}
We start from an initial data $a\in \dot B^{-1+n/r}_{r,\infty}(\R^n)\cap \dot H^{-1}(\R^n)$, with $2\le n<r<\infty$, such that 
\begin{equation}
\label{eq:small-a}
    \|a\|_{\dot B^{-1+n/r}_{r,\infty}}<\eta
\end{equation}
 (the constant of Proposition~\ref{prop:exi}).
 
 \medskip
\noindent{\emph{First step:}}
We initialise our construction putting
\[
f^{(0)}=0.
\]
In this case, we deduce from Proposition~\ref{prop:exi} that a solution 
$u^{(0)}\in X_r\cap L^2(\R^n\times \R^+)$ to (NS$_0$) does exist,
satisfying the estimates
\[
    \|u^{(0)}\|_{X_r}< C_0\eta,\qquad
    \|u^{(0)}\|_{L^2(\R^n\times\R^+)}\le {\textstyle\frac{4}{5}}\|a\|_{\dot H^{-1}}.
\]
Now, let $m\in\{1,2,\dots\}$.
We inductively assume that there exists $f^{(m-1)}$ and  a global mild solution 
$u^{(m-1)}$ of (NS$_{m-1}$)
such that
\begin{subequations}
    \label{subab}
        \begin{align}
        \label{IA1}
        \tag{IA1}
        &u^{(m-1)}\in X_r, \qquad \qquad\qquad\quad\:
        \|u^{(m-1)}\|_{X_r}<C_0\eta\\
        \label{IA2}
        \tag{IA2}
        &u^{(m-1)}\in L^2(\R^n\times \R^+), \qquad 
        \|u^{(m-1)}\|_{L^2(\R^n\times \R^+)} \le  \|a\|_{\dot H^{-1}}.
    \end{align}
    \end{subequations}

We construct $f^{(m)}$ using the function $\chi$. For the moment we will just need two properties of $\chi$, namely 
$\chi\in Y_r$ and $\chi\in L^2(\R^+,\dot H^{-1}(\R^n))$.
(The latter condition, could be replaced by the condition 
$\chi\in L^{(4+2n)/(4+n)}(\R^n\times\R^+)$, making some obvious modification in
the arguments below).
Next we define, for $k,\ell=1,\dots,n$,
\begin{equation}
\label{eq:recf}
f^{(m)}_{k\ell}(x,t)
:=\begin{cases}
c_{k\ell}^{(m-1)}\chi(x,t) &\; k\neq \ell, \\
(c_{kk}^{(m-1)}-\bar{c}^{(m-1)})\chi(x,t) &\;k=\ell,
\end{cases}
\end{equation}
where
\begin{equation*}
\label{compon}
c_{k\ell}^{(m-1)}
=\int_0^\infty\!\!\!\int_{\R^n} 
u^{(m-1)}_k u_\ell^{(m-1)} \text{\,d}y\text{\,d}s,
\end{equation*}
and\footnote{
The precise of choice of $\bar c^{(m-1)}$
is not important: we could choose to take 
$\bar c^{(m-1)}$ to be any real convergent sequence as $m\to+\infty$.   
For example,  
the simple choice $\bar c^{(m-1)}=0$, or  
$\bar c^{(m-1)}=\text{const.}$
would do. 
But the choice of defining $\bar{c}^{(m-1)}$ as the average
of $c_{11}^{(m-1)},\ldots, c_{nn}^{(m-1)}$ is somehow more natural for the following reason:
if there is no need to put a forcing on the fluid, because the flow is already rapidly dissipative, then our forcing term vanishes.
For example, if one starts from a symmetric initial data in the sense of~\cite{Brandolese MA 2004}
(or if for some other reason the energy matrix of the flow is a diagonal matrix),
then the corresponding solution of the free Navier--Stokes equations will be rapidly dissipative and so
there is no need to apply an external forcing to the fluid to get a rapidly dissipative flow.
With our choice of $\bar{c}^{(m-1)}$ we do have that $f^{(m)}=0$,
for any~$m$.}
\begin{equation*}
\bar{c}^{(m-1)}
=\frac{1}{n}\bigl(c_{11}^{(m-1)}+\dots+c_{nn}^{(m-1)}\bigr).
\end{equation*}

By \eqref{IA2}, $u^{(m-1)}\in L^2(\R^n\times\R^+)$, hence $f^{(m)}$ is well defined. Moreover,
\begin{subequations}
    \begin{equation}
    \label{eq:inef}
        \|f^{(m)}\|_{Y_r}
        \leq
        n^2\|a\|_{\dot H^{-1}}^2
        \|\chi\|_{Y_r}
    \end{equation}
    and
    \begin{equation}
        \|f^{(m)}\|_{L^2(\R^+;\dot H^{-1})}
        \leq n^2\|a\|_{\dot H^{-1}}^2
        \|\chi\|_{L^2(\R^+;\dot H^{-1})}.
    \end{equation}
    \end{subequations}
Now, under the smallness condition,
\begin{equation}
\label{eq:newsmall}
\|a\|_{\dot B^{-1+n/r}_{r,\infty}}+ n^2\|a\|_{\dot H^{-1}}^2
\|\chi\|_{Y_r} <\eta,
\end{equation}
assumption~\eqref{eq: lll} will be satisfied with $f^{(m)}$ instead of~$f$. 
Then, from items i) and ii) of Proposition~\ref{prop:exi},
we deduce the existence of a solution $u^{(m)}\in X_{r}$ to (NS$_m$) 
such that 
\begin{equation*}
\|u^{(m)}\|_{X_r}< C_0\eta,
\qquad
\|u^{(m)}\|_{X_\infty} \le C_0\eta,
\end{equation*}
and
\begin{equation}
    \label{eq:L2bound}
    \begin{split}
        \|u^{(m)}\|_{L^2(\R^n\times \R^+)} 
        &\le
        {\textstyle \frac{4}{5}}\|a\|_{\dot H^{-1}}
            +C_0\|f^{(m)}\|_{L^2(\R^+;\dot H^{-1})}\\
        &\le 
        {\textstyle \frac{4}{5}}\|a\|_{\dot H^{-1}}
            +  n^2C_0 \|a\|_{\dot H^{-1}}^2 \|\chi\|_{L^2(\R^+;\dot H^{-1})}\\
        &\le \|a\|_{\dot  H^{-1}}.
      \end{split}
    \end{equation}
For the validity of the last inequality we need to put one additional size condition on $\chi$,
namely
\begin{subequations}
\begin{equation}
\label{eq:newsmall2}
n^2C_0\|a\|_{\dot H^{-1}}
\|\chi\|_{L^2(\R^+;\dot H^{-1})}
\le \frac{1}{5}.
\end{equation}

Summarizing, if $a$ and $\chi$ satisfy conditions~\eqref{eq:newsmall} and \eqref{eq:newsmall2}, then conditions (IA1)--(IA2) hold true with $u^{(m)}$ instead of $u^{(m-1)}$.
Therefore, under such conditions the sequence $(f^{(m)})$ and the sequence of solutions $(u^{(m)})$ are indeed well defined by algorithm~\eqref{eq:recf}. Moreover, (IA1)--(IA2) hold by induction for any integer $m\ge1$.
Instead of~\eqref{eq:newsmall2}, one could otherwise work under the condition

\begin{equation}
\label{eq:newsmall2'}
    n^2C_0\|a\|_{\dot H^{-1}}\|\chi\|_{L^{\frac{4+2n}{4+n}}(\R^n\times \R^+)}\le \frac{1}{5}
\end{equation}
\end{subequations}
and obtain the same result.

\medskip
\noindent\emph{Second step : convergence.}
Let us now establish the convergence of the sequences of solutions and
forcing terms constructed before.
The goal is to show that, under the conditions~\eqref{small:alla}-\eqref{small:allb},
there exists $f\in Y_r\cap L^2(\R^+;\dot H^{-1})$ and a mild solution 
$v\in X_r\cap L^2(\R^n\times\R^+)$ of (NS)
such that
\[
\begin{split}
 & u^{(m)} \to v \quad\text{in $L^2(\R^n\times \R^+)$ in $X_{r}$ and in $X_\infty$}\\
 & f^{(m)} \to f \quad\text{in $Y_r$ and in $L^2\bigl(\R^+; \dot H^{-1}(\R^n)\bigr)$}.
\end{split}
\]
(If, instead of  \eqref{eq:newsmall2}, the condition \eqref{eq:newsmall2'} is available,
then the convergence $f^{(m)}\to f$ would hold in $L^{(4+2n)/(4+n)}(\R^n\times\R^+)$ instead of in 
$L^2\bigl(\R^+; \dot H^{-1}(\R^n)\bigr)$.)

We start observing that
\begin{equation*}
\begin{split}
u^{(m+1)}(t)-u^{(m)}(t) &=
\int_0^t e^{(t-s)\Delta}\PP\nabla\cdot
[f^{(m+1)}-f^{(m)}](s)\text{\,d}s
\\
&\qquad +\int_0^t 
e^{(t-s)\Delta}\PP\nabla\cdot
\bigl[(u^{(m+1)}-u^{(m)})\otimes u^{(m+1)}\bigr](s)\text{\,d}s 
\\
&\qquad +
\int_0^t e^{(t-s)\Delta}\PP\nabla\cdot
\bigl[u^{(m)}\otimes(u^{(m+1)}-u^{(m)})\bigr](s)\text{\,d}s
\\
&=:\mathcal{I}_1^{(m)}(t) 
+ \mathcal{I}_2^{(m)}(t)
+\mathcal{I}_3^{(m)}(t).
\end{split}
\end{equation*}
Applying Lemma~\ref{lem:mix}, Item~ii), we obtain
\begin{equation*}
\begin{split}
\|\mathcal{I}_2\|_{L^2(\R^n\times\R^+)}+\|\mathcal{I}_3\|_{L^2(\R^n\times\R^+)}
&\lesssim \Bigl(\|u^{(m)}\|_{X_r}+\|u^{(m+1)}\|_{X_r}\Bigr)
\|u^{(m+1)}-u^{(m)}\|_{L^2(\R^n\times\R^+)}\\
&\lesssim \eta\,\|u^{(m+1)}-u^{(m)}\|_{L^2(\R^n\times\R^+)}.
\end{split}
\end{equation*}
After replacing $\eta$  by a smaller constant (this impacts the suitable value of $\eta_0$ in the statement of the proposition), still depending only on $n$ and $r$,
then computing the norms in ${L^2(\R^n\times\R^+)}$, we see that
the two last term can be absorbed by the first one. Namely,
\[
\|u^{(m+1)}-u^{(m)}\|_{L^2(\R^n\times\R^+)} \lesssim \|\mathcal{I}_1^{(m)}\|_{L^2(\R^n\times\R)}.
\]
Let us now estimate $\mathcal{I}_1^{(m)}(t)$. 
Applying Lemma~\ref{lem:L2f} we get
\begin{equation*}
\begin{split}
\|\mathcal{I}_1^{(m)}\|_{L^2(\R^n\times\R^+)}
&\lesssim 
 \|f^{(m+1)}-f^{(m)}\|_{L^2(\R^+;\dot H^{-1})}.
\end{split}
\end{equation*}

But
\begin{equation*}
\begin{split}
\|f^{(m+1)} &-f^{(m)}\|_{L^2(\R^+;\dot H^{-1})}\\
&\leq \sum_{k,\ell=1}^n 
\|f_{k\ell}^{(m+1)}-f_{k\ell}^{(m)}\|_{L^2(\R^+;\dot H^{-1})}
\\
&=\biggl(\sum_{k\neq \ell}
\bigl|c_{k\ell}^{(m)}-c_{k\ell}^{(m-1)} \bigr| 
+
\sum_{k=1}^n
\bigl|(c_{kk}^{(m)}-\bar{c}^{(m)})-
(c_{kk}^{(m-1)}-\bar{c}^{(m-1)}) \bigr| \biggr) 
\|\chi\|_{L^2(\R^+;\dot H^{-1})}\\
&\lesssim
\|a\|_{\dot H^{-1}}
\|\chi\|_{L^2(\R^+;\dot H^{-1})}
\|u^{(m)}-u^{(m-1)}\|_{L^2(\R^n\times \R^+)}.
\end{split}
\end{equation*}
In the last inequality we used~\eqref{eq:L2bound}.
From this and from the two estimates above we deduce that there is a constant $\eta_0>0$ (only dependent on the space dimension and on $r$), such that if
\begin{subequations}
\begin{equation}
 \label{eq:laste}
 \|a\|_{\dot H^{-1}}\|\chi\|_{L^2(\R^+;\dot H^{-1})} <\eta_0,
\end{equation}
then 
\begin{equation*}
\|u^{(m+1)}-u^{(m)}\|_{L^2(\R^n\times\R^+)} \le
\textstyle \frac12 
\|u^{(m)}-u^{(m-1)}\|_{L^2(\R^n\times\R^+)}
\end{equation*}
for all $m\ge1$. 
This implies that the sequence of solutions $u^{(m)}$ converges in $L^2(\R^n\times \R^+)$.
As before, instead of~\eqref{eq:laste},
one can reach the same conclusion under the condition
\begin{equation}
\label{eq:laste'}
\|a\|_{\dot H^{-1}}\|\chi\|_{L^{\frac{4+2n}{4+n}}(\R^n\times \R^+)}<\eta_0
\end{equation}
\end{subequations}

Moreover, arguing as before, but using Lemma~\ref{lem:mix}, Item~i), next  Lemma~\ref{lem:L2f}, Item~i), we see that
\[
\begin{split}
\|u^{(m+1)}-u^{(m)}\|_{X_r} 
&\lesssim \|\mathcal{I}_1^{(m)}\|_{X_r}\\
&\lesssim 
    \|f^{(m+1)}-f^{(m)}\|_{Y_r}\\
&\lesssim
\|a\|_{\dot H^{-1}}
\|\chi\|_{Y_r}
\|u^{(m)}-u^{(m-1)}\|_{L^2(\R^n\times \R^+)}.
\end{split}
\]
Therefore,
\[
\|u^{(m+1)}-u^{(m)}\|_{X_r} 
\lesssim 2^{-m}\|a\|_{\dot H^{-1}}
\|\chi\|_{Y_r} \|u^{(1)}-u^{(0)}\|_{L^2(\R^n\times \R^+)}.
\]
This proves that $u^{(m)}$ converges also in $X_r$. A similar argument establishes
the convergence of $u^{(m)}$ in $X_\infty$.

We denote by $v$ the limit in $L^2(\R^n\times \R^+)$ and in $X_r\cap X_\infty$
of $u^{(m)}$.
As $v\in L^2(\R^n\times \R^+)$, 
we can define the limit forcing term
\begin{equation*}
f_{k\ell}(x,t)
=\begin{cases}
c_{k\ell}^{(\infty)}\chi(x,t) & k\neq \ell, \\
(c_{kk}^{(\infty)}-\bar{c}^{(\infty)})\chi(x,t) &k=\ell,
\end{cases}
\end{equation*}
where
\begin{equation*}
c_{k\ell}^{(\infty)}
=\int_0^\infty\!\!\!\int_{\R^n} 
v_k v_\ell \text{\,d}y\text{\,d}s
\quad (k,\ell=1,\dots,n),
\end{equation*}
and
\begin{equation*}
\bar{c}^{(\infty)}
=\frac1n\bigl(c_{11}^{(\infty)}+\dots+c_{nn}^{(\infty)}\bigr)
=\frac1n\int_0^\infty\!\!\!\int|v|^2\dd y\dd s.
\end{equation*}

From the convergence $u^{(m)}\to v$ in $L^2(\R^n\times \R^+)$
we see that, for any $k,\ell=1,\ldots,n$,
$c_{k\ell}^{(m)} \to c_{k\ell}^{(\infty)}$ and
$\bar{c}^{(m)}\to \bar{c}^{(\infty)}$ as $m\to \infty$.
Thus, $f_{k\ell}$ belongs to the same function spaces where $\chi$ belong to,
namely $Y_r\cap L^2\bigl(\R^+;\dot H^{-1}(\R^n)\bigr)$
(or, otherwise, under the alternative condition on $\chi$, to $Y_r\cap L^{(4+2n)/(4+n)}(\R^n\times\R^+)$)
and $f_{k\ell}^{(m)} \to f_{k\ell}$ strongly as $m\to \infty$ in such function spaces

In particular, using the convergence in $Y_r$ of the sequence $f^{(m)}$ and
Lemma~\ref{lem:L2f}, we deduce that
$\int_0^t 
 e^{(t-s)\Delta} \PP \nabla\cdot f^{(m)}(s)\text{\,d}s
\to
\int_0^t 
 e^{(t-s)\Delta} \PP\nabla\cdot f(s)\text{\,d}s$
 for all $t>0$.
The strong convergence of $u^{(m)}$ to $v$ in $L^2(\R^n\times\R^+)$ and $X_r\cap X_\infty$ allows to pass to the limit in (NS$_m$).
This implies that the limit $v$ satisfies the Navier--Stokes equations, with force 
$\nabla\cdot f$, and initial data $a$, in its integral form:
\begin{equation*}
v=e^{t\Delta}a
+\Phi(f)+G(v,v).
\end{equation*}
Using that $\int_0^\infty\!\!\!\int \chi=1$, we finally
get~\eqref{eq:ort}, with $c=\frac1n\int_0^\infty\!\!\!\int |v|\dd y\dd s$.
This establishes Proposition~\ref{pro:algo}.
\end{proof}

\begin{remark}
\label{rem:tec}
For later use, we point out that, under the
assumption of Proposition~\ref{pro:algo}, if in addition
$2\le p\le \infty$
and $\chi\in \overline Y_{2n/(n+2)}\cap\overline Y_{p}$,
then $u^{(m)}\to v$ in $\overline X_p$
and $f^{(m)}\to f$ also in $\overline Y_{2n/(n+2)}\cap \overline Y_{p}$.
Here $u^{(m)}$ and $f^{(m)}$ are the sequences of solutions and forcing terms constructed before.

To prove the above claim, we fist observe that, by interpolation, $\chi\in \overline Y_2$.
Now
we proceed in the same way as at the beginning of Step~2 of previous proof, but now using Lemma~\ref{lem:mix}, Item~iii), next  Lemma~\ref{lem:L2f}, Item~iv). Then we get
\[
\begin{split}
\|u^{(m+1)}-u^{(m)}\|_{\overline X_2}+ \|u^{(m+1)}-u^{(m)}\|_{\overline X_p} 
&\lesssim \|\mathcal{I}_1^{(m)}\|_{\overline X_2} + \|\mathcal{I}_1^{(m)}\|_{\overline X_p}\\
&\lesssim 
\|f^{(m+1)}-f^{(m)}\|_{\overline Y_{\frac{2n}{n+2}}}
+\|f^{(m+1)}-f^{(m)}\|_{\overline Y_{p}}\\
&\lesssim
\|a\|_{\dot H^{-1}}
\bigl(\|\chi\|_{\overline Y_{\frac{2n}{n+2}}}+\|\chi\|_{\overline Y_{p}}\bigr)
\|u^{(m)}-u^{(m-1)}\|_{L^2(\R^n\times \R^+)}.
\end{split}
\]
But, as we have seen before, right after~\eqref{eq:laste},
$\|u^{(m+1)}-u^{(m)}\|_{L^2(\R^n\times\R^+)} \lesssim 2^{-m}\|u^{(1)}-u^{(0)}\|_{L^2(\R^n\times\R^+)}$.
Therefore,
\[
\|u^{(m+1)}-u^{(m)}\|_{\overline X_2} + \|u^{(m+1)}-u^{(m)}\|_{\overline X_p} 
\lesssim 2^{-m}\|a\|_{\dot H^{-1}}
\bigl(\|\chi\|_{\overline Y_{\frac{2n}{n+2}}}+\|\chi\|_{\overline Y_{p}}\bigr)
\|u^{(1)}-u^{(0)}\|_{L^2(\R^n\times\R^+)},
\]
which establishes the convergence in the $\overline X_2$-norm and in the $\overline X_p$-norm of the sequence $u^{(m)}\to v$
and that of $f^{(m)}\to f$ in $\overline Y_{2n/(n+2)}\cap \overline Y_{p}$.
\end{remark}

\section{Conclusion of the proof of Theorem~\ref{th:theo1}}
\label{sec:conclusion}

\begin{proof}[Proof of Theorem~\ref{th:theo1}]

We will prove the result of Theorem~\ref{th:theo1} under more general conditions
on $\chi$ than stated in the theorem.
Indeed, we need to assume that $a$ and $\chi$ satisfy the conditions of Proposition~\ref{pro:algo}.
We need to assume also that 
\begin{equation}
\label{norL1}
\|\chi(t)\|_p=\mathcal{O}\bigl(t^{-1-\frac n2(1-\frac{1}{p})}\bigr),
\qquad\text{for all $1\le p\le\infty$ as $t\to+\infty$.}
\end{equation}
And we finally need to assume that
\begin{equation}
\label{norY}
\chi\in \overline Y_{\frac{2n}{n+2}}\cap \overline Y_\infty,
\end{equation}
so that the assertion of Remark~\ref{rem:tec} applies for any $2\le p\le\infty$.
When $t$ is large, this latter condition implies a bound on the $L^p$-norms of $\chi(t)$ that is less demanding than~\eqref{norL1}.
Of course, all these conditions on $\chi$ are satisfied if 
$\chi\in L^\infty_c(\R^n\times\R^+)$ and the size conditions of Theorem~\ref{th:theo1} hold.

Let $f=(f_{k\ell})$ be the matrix-valued forcing term constructed in Proposition~\ref{pro:algo} and be $v$ the corresponding solution of (NS) with external force $\nabla\cdot f$, so that
\[
f_{k\ell}=\biggl(\int_0^\infty\!\!\!\int v_k v_\ell- 
\frac1n\int_0^\infty\!\!\!\int |v|^2 \delta_{k\ell}\biggr)\chi
\;
\qquad(k,\ell=1,\ldots,n) .
\]

We now discuss the large time decay rate of the Lebesgue norms of $v(t)$.
For $j=1,\ldots,n$, the $j$-component of the velocity $v$ satisfy 
\begin{equation*}
v_j(x,t)-e^{t\Delta}a_j(x)=
-\sum_{\ell,k=1}^n \int_0^t \!\!\!\int F_{k\ell,j}(x-y,t-s)
(v_k v_\ell-f_{k\ell})(y,s)\text{\,d}y\text{\,d}s.
\end{equation*}
Let us apply Theorem~\ref{th:asy} to each one of the terms of the summation
in the right-hand side, with 
\[
M=F_{k\ell,j}
\qquad\text{and}\qquad 
\w_{k\ell}=v_k v_\ell-f_{k\ell}
\qquad
(j,k,\ell=1,\ldots,n).\]
The required conditions on~$M$~\eqref{ass:M} do hold, by the properties
of~$F$ that we recalled at the beginning of the proof of Lemma~\ref{lem:L2f}. 

Let us check the needed conditions on~$\w$.
We have $v\in L^2(\R^n\times\R^+)$ and $\chi\in L^1(\R^n\times\R^+)$,
hence $f\in L^1(\R^n\times\R^+)$ and so $\w$ does belong to $L^1(\R^n\times\R^+)$.
Moreover, the conditions that we put on $a$ and $\chi$ insure, by Remark~\ref{rem:tec}, that $v\in \overline X_p$
for any $2\le p\le \infty$. In particular,
$\|v(t)\|_2^2={\cal O}(t^{-1})$. Combining this with condition~\eqref{norL1}
we deduce that $\|\w(t)\|_1={\cal O}(t^{-1})$.
Then Theorem~\ref{th:asy} applies
and we get, at least for $1\le q<n/(n-1)$,
\begin{equation}
\label{eq:as}
\biggl\|v_j(x,t)-e^{t\Delta}a_j(x)+ \sum_{\ell,k=1}^n F_{k\ell,j}(\cdot,t)
\int_0^\infty\!\!\!\int \w_{k\ell}\biggr\|_q
=o\bigl(t^{-\frac{1}{2}-\frac{n}{2}(1-\frac{1}{q})}\bigr).
\end{equation}
Let us extend the range of the parameter $q$ for the above asymptotic profile.
For any $1\le q\le \infty$, we have also $v\in \overline X_{2q}$. Hence, using~\eqref{norL1},
\[
\|\w(t)\|_{q}\le \|v(t)\|_{2q}^2+\|f(t)\|_{q}\lesssim t^{-1-\frac{n}{2}(1-\frac{1}{q})}.
\]
By the last assertion of~Theorem~\ref{th:asy}, we now deduce that~\eqref{eq:as}
holds true for any $1\le q\le \infty$.

But
\[
\int_0^\infty\!\!\!\int \w_{k\ell}=
\Bigl(\frac{1}{n}\int_0^\infty\!\!\!\int|v|^2\Bigr)\delta_{k\ell},
\]
i.e., the matrix $\int_0^\infty\!\!\!\int \w$ is a multiple of the identity matrix.
Hence, by Miyakawa--Schonbek criterion~\cite[Proposition 2.1]{Miyakawa Schonbek},
 \[
 \sum_{\ell,k=1}^n F_{k\ell,j}(\cdot,t)
\int_0^\infty\!\!\!\int \w_{k\ell}\equiv0,
 \]
and this in turn implies
\begin{equation}
\label{eq74}
\bigl\|v_j(x,t)-e^{t\Delta}a_j(x) \bigr\|_q
=o\bigl(t^{-\frac{1}{2}-\frac{n}{2}(1-\frac{1}{q})}\bigr)
\qquad (1\le q\le \infty).
\end{equation}

Now, if $a\in \dot B^{-(n+2)/2}_{2,c_0}(\R^n)$, then $\|e^{t\Delta}a\|_2=o(t^{-(n+2)/4})$ by Lemma~\ref{lem:hf} and we readily get $\|v(t)\|_2=o(t^{-(n+2)/4})$. Conversly, if $v$ is rapidly dissipative then 
$\|e^{t\Delta}a\|_2=o(t^{-(n+2)/4})$ by~\eqref{eq74}.
But since we are assuming also $a\in \dot H^{-1}(\R^n)$, then $\|e^{t\Delta} a\|_2\lesssim\|a\|_{\dot H^{-1}}t^{-1/2}$, and so
$\sup_{t>0} t^{(n+2)/4}\|e^{t\Delta}a\|_2<\infty$.
Then 
$a\in \dot  B^{-(n+2)/2}_{2,\infty}(\R^n)$ and applying the converse part of Lemma~\ref{lem:hf} we deduce that, in fact,  $a\in \dot B^{-(n+2)/2}_{2,c_0}(\R^n)$.
\end{proof}

Corollary~\ref{cor:uni} is an immediate consequence of Theorem~\ref{th:theo1}.

\section{Acknowledgements}

The authors would like to thank the Reviewers for their useful remarks. In particular,  
the authors gratefully acknowledge the comments of one of the Reviewers, 
that they incorporated at the end of Section~\ref{sec:statement}, 
to improve the readability of the proof of the main result.

The work of the second author is partially supported by JSPS Grant-in-Aid for Scientific Research(C) 22K03385.


\end{document}